\newcommand{\trd}[1]{\textcolor{red}{#1}}
\newcommand{\zed}{{\ensuremath{\mathbb{Z}}}} 
\newcommand{\V}{\ensuremath{\mathcal{V}}}
\newcommand{\W}{\ensuremath{\mathcal{W}}}
\newcommand*{\Bcell}[2]{
\draw[fill=blue!10!white] (#2,1-#1)--++(1,0)--++(0,1)--++(-1,0)--cycle;
}
\newtheorem{theorem}{Theorem}[section]
\newtheorem{lemma}[theorem]{Lemma}
\theoremstyle{definition}
\newtheorem{example}{Example}[section]
\newtheorem{definition}{Definition}[section]
\newtheorem{remark}[theorem]{Remark}
\title{Strong External Difference Families and Classification of $\alpha$-valuations}
\author[1]{Donald L.\ Kreher}
\affil[1]{Department of Mathematical Sciences\\
Michigan Technological University\
Houghton, MI 49931-1295, U.S.A.}
\author[2]{Maura B.\ Paterson}
\author[3]{Douglas R.\ Stinson\thanks{D.R.\ Stinson's research is supported by  NSERC discovery grant RGPIN-03882.}}
\affil[2]{School of Computing and Mathematical Sciences, Birkbeck, University of London, Malet St, London WC1E 7HX, UK}
\affil[3]{David R.\ Cheriton School of Computer Science\\University of Waterloo\\ Waterloo ON, N2L 3G1\\Canada}
\begin{document}
\maketitle

\begin{abstract}
One method of constructing $(a^2+1, 2,a, 1)$-SEDFs (i.e., strong external difference families) in $\mathbb{Z}_{a^2+1}$ makes use of $\alpha$-valuations of complete bipartite graphs 
$K_{a,a}$. We explore this approach and we provide a classification theorem which shows that all such $\alpha$-valuations can be constructed recursively via a sequence of ``blow-up'' operations. We also enumerate all $(a^2+1, 2,a, 1)$-SEDFs in $\mathbb{Z}_{a^2+1}$ for $a \leq 14$ and we show that all these SEDFs are equivalent to $\alpha$-valuations via affine transformations. Whether this holds for all $a > 14$ as well is an interesting open problem. 
We also study SEDFs in dihedral groups, where we show that two known constructions are equivalent.
\end{abstract}

\section{Introduction}
\label{intro.sec}

\begin{definition}[\cite{rosa}]
A {\em $\beta$-valuation} of a graph $G$ with $n$ edges is a one-to-one map $\V$ of the vertices into the set of integers $\{0,1,\dotsc,n\}$, such that, if we label each edge by the absolute value of the differences of the labels of the corresponding vertices, then the resulting edge labels are precisely the elements of the set $\{1,2,\dotsc,n\}$.  (This is also known as a {\em graceful labelling}.)
\end{definition}

\begin{definition}[\cite{rosa}]
An {\em $\alpha$-valuation} of a graph $G$ with $n$ edges is a $\beta$-valuation that satisfies the additional condition that there is some value $x$ with $0\leq x\leq n$ such that each edge is incident with one vertex whose label is at most $x$, and one whose label is greater than $x$.
\end{definition}
A graph with an $\alpha$-valuation $\V$ is necessarily bipartite, with the partition given by the set $V^{\sf large}$, which consists  of vertices whose labels are larger than $x$, and $V^{\sf small}$, which consists of vertices whose labels are at most $x$. Note that the value of $x$ is uniquely determined, as the edge whose vertices have difference $1$ must have labels $x$ and $x+1$.
For convenience, we will write $\V = (V^{\sf small}, V^{\sf large})$.

\medskip

It is often convenient to identify the vertices and their labels, and we will do this throughout the paper.

\begin{example}
\label{canonical.exam}
\cite{rosa}
We obtain an $\alpha$-valuation of the complete bipartite graph $K_{a,a}$ by defining $V^{\sf small} = \{0,1, \dots , a-1\}$ and
$V^{\sf large} = \{ a,2a, \dots , a^2\}$.
\end{example}

\begin{lemma}\label{lem:nointdiffrep}
Let $(V^{\sf small},V^{\sf large})$ be an $\alpha$-valuation of a complete bipartite graph.  Then there do not  exist distinct elements $x,y\in V^{\sf small}$ and distinct $u,v\in V^{\sf large}$ with $x-y=u-v$.
\end{lemma}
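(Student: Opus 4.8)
The plan is to argue by contradiction, invoking the defining feature of a $\beta$-valuation (and hence of an $\alpha$-valuation): the map sending each edge to the absolute difference of its two vertex labels is a bijection onto $\{1,2,\dots,n\}$, so in particular it is injective. I would therefore assume, for contradiction, that there exist distinct $x,y\in V^{\sf small}$ and distinct $u,v\in V^{\sf large}$ with $x-y=u-v$, and manufacture from these a pair of distinct edges carrying the same label.

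The key algebraic step is simply to rewrite the hypothesis $x-y=u-v$ in the equivalent form $u-x=v-y$. Since the graph is complete bipartite with the two sides being exactly $V^{\sf small}$ and $V^{\sf large}$ (this is where the $\alpha$-structure is used: it is what allows us to treat $V^{\sf small}$ and $V^{\sf large}$ as the two colour classes of the bipartition), each of the pairs $\{u,x\}$ and $\{v,y\}$ joins a large vertex to a small vertex and is therefore an edge of the graph. Their edge labels are $|u-x|$ and $|v-y|$, and the rewritten identity $u-x=v-y$ forces these two labels to be equal.

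To reach a contradiction I then only need the two edges $\{u,x\}$ and $\{v,y\}$ to be distinct, and this holds because $u\neq v$, so the edges have different endpoints on the large side. Two distinct edges with a common label contradict the injectivity of the edge-labelling, which completes the proof.

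I do not anticipate a serious obstacle; the argument is essentially a one-line rearrangement presented as a contradiction. The only points requiring a moment's care are verifying that $\{u,x\}$ and $\{v,y\}$ genuinely are edges (immediate from completeness of the bipartite graph) and that they are distinct (immediate from $u\neq v$). Note also that no appeal to the sign of the differences is needed, since the signed equality $u-x=v-y$ already yields equality of the absolute values $|u-x|=|v-y|$.
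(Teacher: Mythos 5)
Your proposal is correct and follows essentially the same route as the paper: rewrite $x-y=u-v$ as $u-x=v-y$ and observe that this gives two distinct cross edges with the same label, contradicting the injectivity of the edge labelling in a $\beta$-valuation. Your version just spells out the details (distinctness of the edges via $u\neq v$, the irrelevance of signs) that the paper leaves implicit.
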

\begin{proof}
If we had $x,y\in V^{\sf small}$ with $x-y=d$ and $u,v\in V^{\sf large}$ with $u-v=d$ for $d\neq 0$, then we would have 
\begin{align*}
u-x&=(v+d)-(y+d),\\
&=v-y,
\end{align*}
which cannot occur in an $\alpha$-valuation.
\end{proof}
%We are interested in $\alpha$-valuations of $K_{a,b}$. 
%We have the following classification theorem.

Let $a,b$ be positive integers.  We define $\Phi_{ab}\colon \{0,1,2,\dotsc,ab\}\rightarrow\{0,1,2,\dotsc,ab\}$ by setting $\Phi_{ab}(x)=ab-x$.  Then $\Phi_{ab}$ is bijective, and $\Phi^2_{ab}(x)=x$ for all $x\in\{0,1,2,\dotsc,ab\}$.
\begin{definition}
\label{equiv.def}
Let $\mathcal{V}_1$ and $\mathcal{V}_2$ be $\alpha$-valuations of the complete bipartite graph $K_{a,b}$.  We say that $\mathcal{V}_1$ and $\mathcal{V}_2$ are {\em equivalent} if $\mathcal{V}_1=\mathcal{V}_2$, or if for each vertex in $K_{a,b}$, the label in $\mathcal{V}_2$ is obtained by applying $\Phi_{ab}$ to its label in $\mathcal{V}_1$.
\end{definition}
We observe that when $\mathcal{V}_1$ and $\mathcal{V}_2$ are equivalent, $\Phi_{ab}$ maps the labels of $V^{\sf small}$ for $\mathcal{V}_1$ to the labels of $V^{\sf large}$ for $\mathcal{V}_2$, and vice versa. 

\subsection{Our contributions}

The rest of the paper is organized as follows.
In Section \ref{classify.sec}, we show that $\alpha$-valuations of $K_{a,b}$ have a rich structure and we prove a classification theorem for them. In particular, we show that all $\alpha$-valuations of $K_{a,b}$ can be constructed by a sequence of the  ``blowup'' operations that were described in \cite{PS24}.
In Section \ref{SEDFs.sec}, we turn our attention to strong external difference families (SEDFs) consisting of two sets of size $a$ in $\zed_{a^2+1}$. We enumerate all such SEDFs for $a \leq 14$ and show that they are all equaivlent to  $\alpha$-valuations of $K_{a,a}$. %One interesting open question is if all such SEDFs can be constructed in this way. 
In Section \ref{dihedral.sec}, we review two constructions for SEDFs in dihedral groups and we show that they are equivalent.

\section{Classification of $\alpha$-valuations of $K_{a,b}$}
\label{classify.sec}

The following is our main classification theorem.
\begin{theorem}
\label{classify.thm}
Let $\mathcal{V}$ be an $\alpha$-valuation of $K_{a,b}$ with $ab>1$.  Then there exists a positive integer $\ell\geq 2$ such that $\mathcal{V}$ is equivalent to an $\alpha$-valuation with the following properties:
\begin{enumerate}
\item The elements of $V^{\sf large}$ are all multiples of $\ell$.
\item The set $V^{\sf small}$ is a union of ``runs'' of $\ell$ consecutive integers, where each run starts with a multiple of $\ell$.
\end{enumerate}
\end{theorem}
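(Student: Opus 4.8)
The plan is to recast the $\alpha$-valuation condition as a tiling (direct-sum) statement and then prove a structural lemma of de Bruijn type. Write $S = V^{\sf small}$, $L = V^{\sf large}$, $n = ab$ and $x = \max S$, so that $\min L = x+1$, $0 \in S$ and $n \in L$. Since the edge labels are exactly $\{1,\dots,n\}$ and there are $|S|\,|L| = n$ edges, the differences $v - s$ (for $v \in L$, $s \in S$) are pairwise distinct. Setting $U = L - (x+1)$ and $T = x - S$ (a shifted copy of $L$ and a reflected copy of $S$, both containing $0$), the identity $(v-s)-1 = \big(v-(x+1)\big) + (x-s)$ shows that $U \oplus T = \{0,1,\dots,n-1\}$ is a direct sum: every element of $\{0,\dots,n-1\}$ has a unique representation as $u+t$. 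Equivalently, the generating functions satisfy $\big(\sum_{u\in U} z^u\big)\big(\sum_{t\in T} z^t\big) = 1 + z + \cdots + z^{n-1}$. I would also record that applying $\Phi_{ab}$ interchanges $U$ and $T$; this is the single degree of freedom that equivalence provides.

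The heart of the argument is a lemma asserting that any such direct sum is layered: if $A \oplus B = \{0,\dots,n-1\}$ with $0 \in A \cap B$ and (relabelling if needed) $1 \in A$, and $\ell \geq 2$ is the least positive integer not in $A$, then $\ell \mid n$, $B \subseteq \ell\mathbb{Z}$, and $A$ is a union of complete blocks $[k\ell,(k+1)\ell)$; moreover the block-indices of $A$ together with $B/\ell$ form a direct sum equal to $\{0,\dots,n/\ell-1\}$. I would prove this by a block-by-block induction. First, since $\{0,\dots,\ell-1\}\subseteq A$, uniqueness forces $B\cap(0,\ell)=\emptyset$, so the smallest positive element of $B$ is $\ell$ itself. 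Then, examining how each integer $k\ell$ and the following $\ell-1$ integers must be covered, one shows inductively that within each block $[k\ell,(k+1)\ell)$ exactly one of two things happens: either the block lies entirely in $A$ and $k\ell\notin B$, or the block misses $A$ and $k\ell \in B$. This simultaneously yields $B\subseteq\ell\mathbb{Z}$ and the block description of $A$, and dividing by $\ell$ produces the recursion (which is what ultimately exhibits $\mathcal V$ as a sequence of blow-ups). This lemma is the main obstacle: the bookkeeping in the inductive step — ruling out alternative representations of $k\ell+r$ for $0<r<\ell$ — is where the real content lies, and some care is needed at the block starts, since $0 \in B$ makes block $0$ behave slightly differently from the others.

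Finally I would translate back. The lemma identifies one side of the direct sum as $B \subseteq \ell\mathbb{Z}$ and the other as $A$, a union of length-$\ell$ runs. Since $\Phi_{ab}$ swaps the two sides, I may choose the representative of $\mathcal{V}$ for which the large-vertex side plays the role of $B$; after this choice, write $S,L,x,U,T$ as above for that representative, so that $U = L-(x+1) = B \subseteq \ell\mathbb{Z}$ and $T = x - S = A$ is a union of length-$\ell$ runs starting at multiples of $\ell$. To obtain property~(1) I must upgrade $U \subseteq \ell\mathbb{Z}$ to $L \subseteq \ell\mathbb{Z}$, i.e.\ verify $\ell \mid (x+1)$; this is automatic, because $A = T$ being a union of blocks forces $\max T \equiv \ell-1 \pmod{\ell}$, while $\max T = x - \min S = x$, so $x \equiv -1 \pmod{\ell}$. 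Given $\ell \mid (x+1)$, reflecting the block decomposition of $T$ through $x$ sends each run of $T$ to a run of $S$ of length $\ell$ starting at a multiple of $\ell$, which is precisely property~(2). Since $1$ lies in the runs side, we have $\ell \geq 2$ for free, meeting the theorem's requirement and completing the argument.
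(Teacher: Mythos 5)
Your reformulation is correct and takes a genuinely different route from the paper. Setting $U=L-(x+1)$ and $T=x-S$ does turn the $\alpha$-valuation condition into the statement that $U\oplus T=\{0,1,\dots,n-1\}$ is a complementing pair of an interval, with $\Phi_{ab}$ swapping the two factors, and the theorem would then follow from the classical layered-structure theorem for such pairs (de Bruijn/Long), which also hands you the recursive blow-up description in the same stroke. The paper instead works directly with $V^{\sf small}$ and $V^{\sf large}$, pinning down the run length $\ell$, the gap structure, and the divisibility of $V^{\sf large}$ through a sequence of repeated-difference arguments (Lemmas \ref{lem3}--\ref{lem8}). Your closing translation is also fine: $T$ being a union of blocks forces $x=\max T\equiv\ell-1\pmod{\ell}$, which upgrades $U\subseteq\ell\mathbb{Z}$ to $L\subseteq\ell\mathbb{Z}$ and sends the blocks of $T$ to runs of $S$ starting at multiples of $\ell$.

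The gap is in the one place you flag as carrying the real content: the induction invariant you propose for the key lemma is false, so the block-by-block induction cannot close as described. You claim that in each block $[k\ell,(k+1)\ell)$ either the block lies entirely in $A$ and $k\ell\notin B$, or the block misses $A$ and $k\ell\in B$. Consider the $\alpha$-valuation of $K_{4,2}$ with $V^{\sf small}=\{0,1,4,5\}$ and $V^{\sf large}=\{6,8\}$: here $n=8$, $x=5$, $A=T=\{0,1,4,5\}$, $B=U=\{0,2\}$ and $\ell=2$. The block $\{6,7\}$ is disjoint from $A$ yet $6\notin B$ (indeed $6=4+2$), and this is not the block-$0$ anomaly you set aside, since it occurs at $k=3$. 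The correct statement decouples the two assertions --- every block is either contained in or disjoint from $A$, and separately $B\subseteq\ell\mathbb{Z}$ --- with no implication between $k\ell\in B$ and the status of the $k$-th block. The lemma itself is true, but proving it needs a different bookkeeping, e.g.\ showing by induction on $m$ that in the unique representation $m=a+b$ one always has $\ell\mid b$ and that $a$ lies in a block of $A$ that is entirely contained in $A$; until that (or an equivalent argument) is supplied, the central step of your proof is missing.
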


We prove Theorem \ref{classify.thm} by proving a sequence of lemmas.

\begin{lemma}
\label{lemab}
Let $\mathcal{V}$ be an $\alpha$-valuation of $K_{a,b}$ with $ab>1$. Then $ab \in V^{\sf large}$ and
$0 \in V^{\sf small}$.
\end{lemma}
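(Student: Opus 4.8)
The plan is to locate the unique edge that realizes the largest possible difference and then read off both conclusions from the threshold structure built into the definition of an $\alpha$-valuation. First I would recall that $K_{a,b}$ has exactly $n = ab$ edges, so by the definition of a $\beta$-valuation the vertex labels are distinct elements of $\{0,1,\dots,ab\}$ and the multiset of edge differences is precisely $\{1,2,\dots,ab\}$. In particular, the value $ab$ occurs as an edge label, i.e.\ some edge $e$ has endpoints whose labels differ by $ab$.

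Next I would pin down those endpoints. Among integers in $\{0,1,\dots,ab\}$, the only pair with absolute difference equal to $ab$ is $\{0,ab\}$. Hence both $0$ and $ab$ actually appear as vertex labels, and they are exactly the two endpoints of $e$. Finally I would invoke the $\alpha$-valuation threshold $x$, with $0 \leq x \leq ab$, characterized by the property that every edge joins a vertex with label $\leq x$ (an element of $V^{\sf small}$) to a vertex with label $> x$ (an element of $V^{\sf large}$). Applying this to $e$: since $0 \leq x$ always holds, the endpoint labelled $0$ lies in $V^{\sf small}$, so $0 \in V^{\sf small}$; the other endpoint, labelled $ab$, is then forced to be the one exceeding $x$, whence $ab > x$ and $ab \in V^{\sf large}$. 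As a byproduct this shows $x < ab$, so $V^{\sf large}$ is nonempty, which is consistent with the hypothesis $ab > 1$.

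I do not expect a genuine obstacle: the whole argument rests on a single observation about the extreme edge, combined with the sign asymmetry baked into the definitions of $V^{\sf small}$ (labels $\leq x$) and $V^{\sf large}$ (labels $> x$). The only point deserving a moment of care is confirming that $0$ and $ab$ are truly used as labels rather than being mere candidate values in the range; this is exactly what the realization of the difference $ab$ by an actual edge guarantees, since no other pair of admissible labels can produce that difference.
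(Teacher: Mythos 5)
Your argument is exactly the paper's: the difference $ab$ can only arise from the pair $\{0,ab\}$, and the threshold structure forces $0 \in V^{\sf small}$ and $ab \in V^{\sf large}$. You simply spell out the details that the paper's one-line proof leaves implicit; the approach is the same and correct.
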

\begin{proof}
The only way to obtain the difference $ab$ is as $ab-0$, where $ab \in V^{\sf large}$ and
$0 \in V^{\sf small}$.
\end{proof}

\begin{lemma}
\label{lem3}
Under the same hypotheses as Theorem {\rm \ref{classify.thm}}, there is an integer  $\ell \geq 2$ such that $\mathcal{V}$ is equivalent to an $\alpha$-valuation that satisfies the following properties, where $x$ is the largest element $x \in V^{\sf small}$.
\begin{enumerate}
\item $x+1, x+\ell+1 \in V^{\sf large}$, 
\item $x+2, \dots , x+ \ell \notin V^{\sf large}$,
\item $x-1, x-2, \dots , x - \ell+1 \in V^{\sf small}$, and
\item $x-\ell  \not\in V^{\sf small}$.
\end{enumerate}
\end{lemma}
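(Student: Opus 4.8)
The plan is to take $x$ to be the threshold of the valuation, so that $x\in V^{\sf small}$ and $x+1\in V^{\sf large}$ (this is the remark following the definition of an $\alpha$-valuation), and to \emph{define} $\ell$ directly as the gap to the next large label: let $\ell$ be the least positive integer with $x+\ell+1\in V^{\sf large}$. With this definition, properties (1) and (2) hold automatically, since $x+1$ and $x+\ell+1$ are large while no integer strictly between them is. It then remains to prove (3) and (4), and to argue that we may assume $\ell\ge 2$ after possibly replacing $\mathcal{V}$ by the equivalent valuation $\Phi_{ab}(\mathcal{V})$.

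For property (3) I would use the gracefulness of the labelling, i.e.\ that every value in $\{1,\dots,ab\}$ occurs exactly once as a difference $u-s$ with $u\in V^{\sf large}$ and $s\in V^{\sf small}$. Fix $k$ with $1\le k\le \ell$ and consider the unique pair realizing the difference $k$. Any large label other than $x+1$ is at least $x+\ell+1$, so pairing it with a label at difference $k\le\ell$ would force the small label to exceed $x$, which is impossible. Hence the difference $k$ must be $(x+1)-(x+1-k)$, forcing $x+1-k\in V^{\sf small}$; letting $k$ range over $1,\dots,\ell$ yields exactly $x-1,\dots,x-\ell+1\in V^{\sf small}$. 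For property (4), note that the difference $\ell+1$ is already realized by $(x+\ell+1)-x$; if $x-\ell$ were also small it would be realized a second time as $(x+1)-(x-\ell)$, contradicting uniqueness. This is precisely the configuration forbidden by Lemma~\ref{lem:nointdiffrep}, with large labels $x+1,x+\ell+1$ and small labels $x-\ell,x$ both differing by $\ell$, so I would cite that lemma to conclude $x-\ell\notin V^{\sf small}$.

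The main obstacle is the normalization $\ell\ge 2$, which is the only place the equivalence of Definition~\ref{equiv.def} is needed. Here I would track how the gap transforms under the reflection $\Phi_{ab}$: the smallest large labels of $\Phi_{ab}(\mathcal{V})$ are the images $ab-s$ of the largest small labels $s$ of $\mathcal{V}$, so the gap of $\Phi_{ab}(\mathcal{V})$ equals the distance from $x$ down to the next small label of $\mathcal{V}$. If $\mathcal{V}$ already has gap $\ell\ge 2$ we are done; and if its gap is $1$, then by the run computation above $x-1\notin V^{\sf small}$, so the next small label below $x$ lies at distance at least $2$, making the gap of $\Phi_{ab}(\mathcal{V})$ at least $2$. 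In either case one of the two equivalent valuations has $\ell\ge 2$. I expect the remaining fiddly points to be purely bookkeeping: confirming that a second large label (respectively, a second small label) exists so that the relevant gap is well defined, which holds once both parts of $K_{a,b}$ have size at least two, and verifying that the reflection does indeed interchange the roles of ``run length'' and ``gap'' as claimed.
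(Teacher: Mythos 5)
Your proof is correct, but it is essentially the paper's argument run in mirror image. The paper first analyses the two possible realizations of the difference $2$, applies $\Phi_{ab}$ to normalize into the case $x-1\in V^{\sf small}$, and only then defines $\ell$ as the length of the run of $V^{\sf small}$ ending at $x$ (so properties (3), (4) and the bound $\ell\geq 2$ come for free); it then derives (2) from Lemma~\ref{lem:nointdiffrep} and (1) from the unique realization of the difference $\ell+1$. You instead define $\ell$ as the gap in $V^{\sf large}$ above $x+1$ (so (1) and (2) come for free), recover the run (3) by sweeping over the differences $1,\dotsc,\ell$, obtain (4) from the difference $\ell+1$, and postpone the normalization $\ell\geq 2$ to the end, correctly observing that $\Phi_{ab}$ interchanges run length and gap and that your property (4) applied with $\ell=1$ forces the reflected gap to be at least $2$. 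The ingredients (each difference occurring exactly once, Lemma~\ref{lem:nointdiffrep}, the reflection $\Phi_{ab}$) are identical, but your organization is slightly tighter: a single sweep over the differences $1,\dotsc,\ell+1$ replaces the paper's separate case analysis for the difference $2$ plus the appeal to Lemma~\ref{lem:nointdiffrep} for property (2). One caveat applies to both proofs: when $a=1$ or $b=1$ the second large (respectively small) label needed to define the gap does not exist, and the lemma as stated actually fails for $K_{1,b}$ with $b\geq 2$, so the hypothesis $ab>1$ should really be $a,b\geq 2$; you are the only one to flag where this is used.
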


\begin{proof}
Consider the difference $1$. It must occur as $1 = (x+1) - x$, where $x$ is the largest element of $V^{\sf small}$ and $x+1$ is the smallest element of $V^{\sf large}$. 

The difference $2$ can only occur in two possible ways:
\begin{description}
\item[case 1:] $2 = (x+2) - x$, or 
\item[case 2:] $2 = (x+1) - (x-1)$.
\end{description}
Only one of these possibilities actually occurs because every difference only occurs once. 

We  show that the two cases are equivalent. Assume the second case holds and suppose we apply  $\Phi_{ab}$. Then
$x \mapsto ab-x$ and $x+1 \mapsto ab-x-1$. In the transformed valuation, $V^{\sf small} \subseteq \{0, \dots ,y\}$ and $V^{\sf large} \subseteq \{y+1, \dots ,ab\}$, where $y = ab-x-1$. The transformation $\Phi_{ab}$ also switches case 1 and case 2.
If we are in case 1, the difference $2 = (x+2) - x$ is mapped to $2 = (y+1) - (y-1)$.
If we are in case 1, the difference $2 = (x+1) - (x-1)$ is mapped to $2 = (y+2) - y$. 

Therefore we assume without loss of generality (applying $\Phi_{ab}$ if necessary) that case 2 holds; hence
$x - 1 \in V^{\sf small}$ and $x + 2 \notin V^{\sf large}$.

Let $\ell$ be the smallest positive integer such that $x - \ell \not\in V^{\sf small}$. Then $x, x-1, \dots , x-\ell+1 \in V^{\sf small}$. Note that $\ell > 1$.

We have already proven 3.\ and 4. To prove 2., we note that pairs of  elements $x, x-1, \dots , x-\ell+1 \in V^{\sf small}$ yield differences $1, \dots , \ell-1$. Then, from Lemma~\ref{lem:nointdiffrep}, $x+2, \dots , x+ \ell \notin V^{\sf large}$ because $x+1 \in V^{\sf large}$.

To complete the proof, we note that the only way that the difference $\ell+1$ can possibly occur is 
$\ell + 1 = (x + \ell+1) - x$, so $x + \ell +1\in V^{\sf large}$.
%This implies that $x+2, \dots , x+ \ell \notin V^{\sf large}$ so as not to contradict Lemma~\ref{lem:nointdiffrep}. 
%Then, in order to obtain differences $2, 3, \dots, \ell$, we must have $x-1, x-2, \dots , x - \ell+1 \in V^{\sf small}$.
\end{proof}

So far, we have the following partial structure, where red text is used to denote elements  that are \emph{not} in the relevant 
sets $V^{\sf small}$ or $V^{\sf large}$: 
\[
\begin{array}{|c|c|}
\hline
\rule[-.3\baselineskip]{0pt}{.18in} V^{\sf small} & V^{\sf large}\\ \hline
\trd{x - \ell} & x+\ell+1 \\ \hline
x - \ell + 1 & \trd{x + \ell} \\ \hline
x - \ell + 2 & \trd{x + \ell-1} \\ \hline
\vdots & \vdots \\ \hline
x-1 & \trd{x + 2} \\ \hline
x & x+1\\ \hline
\end{array}
\]
Note that there is a run of $\ell$ consecutive elements in $V^{\sf small}$ and a difference $\ell$ between consecutive elements in 
$V^{\sf large}$. This pattern continues, as we show in the next lemmas.

\begin{lemma}
\label{lem4}
Under the same hypotheses as Theorem {\rm \ref{classify.thm}}, $\mathcal{V}$ is equivalent to an $\alpha$-valuation in which no run of elements in $V^{\sf small}$ has length greater than  $\ell$.
\end{lemma}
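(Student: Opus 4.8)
The plan is to exploit a difference that the normalization in Lemma~\ref{lem3} has already forced into $V^{\sf large}$. The key observation is that properties~1 and~2 of Lemma~\ref{lem3} place both $x+1$ and $x+\ell+1$ in $V^{\sf large}$, so the value $\ell$ is realized as a difference of two distinct elements of $V^{\sf large}$, namely $\ell = (x+\ell+1) - (x+1)$. I would work throughout with the equivalent $\alpha$-valuation supplied by Lemma~\ref{lem3}, so that the integer $\ell$ and this difference are available.

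Next I would feed this into Lemma~\ref{lem:nointdiffrep}. Since $\ell$ occurs as a difference of distinct elements of $V^{\sf large}$, that lemma forbids $\ell$ from also occurring as a difference of distinct elements of $V^{\sf small}$; that is, there are no distinct $p,q\in V^{\sf small}$ with $p-q=\ell$. The conclusion then follows by a short contradiction argument: suppose some run in $V^{\sf small}$ had length $m>\ell$ and write it as $c, c+1, \dots, c+m-1$. Then $c$ and $c+\ell$ both lie in the run, because $c+\ell \leq c+m-1$, and they are distinct elements of $V^{\sf small}$ differing by exactly $\ell$, contradicting the previous step. Hence every run in $V^{\sf small}$ has length at most $\ell$, and since the valuation in question is equivalent to $\mathcal{V}$, this proves the lemma.

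I do not expect a genuine obstacle here: once the difference $\ell$ is spotted in $V^{\sf large}$, the result is immediate from Lemma~\ref{lem:nointdiffrep}. The only point requiring care is bookkeeping the equivalence, since the parameter $\ell$ is defined relative to the normalized valuation of Lemma~\ref{lem3}; thus the statement that no run has length greater than $\ell$ should be read as a property of that normalized (equivalent) valuation rather than of the original $\mathcal{V}$.
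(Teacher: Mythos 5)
Your proposal is correct and is essentially identical to the paper's proof: both exploit the difference $\ell = (x+\ell+1)-(x+1)$ in $V^{\sf large}$ guaranteed by Lemma~\ref{lem3} and invoke Lemma~\ref{lem:nointdiffrep} to rule out two elements of $V^{\sf small}$ differing by $\ell$ within a single run. No issues.
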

\begin{proof} Suppose $a, a-1, \dots , a-k \in V^{\sf small}$ with $k \geq \ell$. Then, in particular,  $a, a - \ell \in V^{\sf small}$. We also have $x+1, x+\ell+1 \in V^{\sf large}$. Because $a - (a - \ell) = \ell = x+\ell+1 - (x+1)$, Lemma \ref{lem:nointdiffrep} is violated. We conclude that all runs of elements of  $V^{\sf small}$ have length at most  $\ell$.
\end{proof}

\begin{lemma}\label{lem5}
Under the same hypotheses as Theorem {\rm \ref{classify.thm}}, $\mathcal{V}$ is equivalent to an $\alpha$-valuation in which every element of $V^{\sf small}$ occurs as part of a run of exactly $\ell$ consecutive elements.
\end{lemma}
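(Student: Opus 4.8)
The plan is to combine the conclusion of Lemma~\ref{lem4} with the exact covering property of an $\alpha$-valuation. Since Lemma~\ref{lem4} already gives that every run has length at most $\ell$, it suffices to show that no run has length strictly less than $\ell$. First I would extract two consequences of Lemma~\ref{lem:nointdiffrep} from the structure established in Lemma~\ref{lem3}. The top run $\{x-\ell+1,\dots,x\}\subseteq V^{\sf small}$ realises every difference in $\{1,\dots,\ell-1\}$ as a difference of two elements of $V^{\sf small}$, so by Lemma~\ref{lem:nointdiffrep} none of $1,\dots,\ell-1$ is a difference of two elements of $V^{\sf large}$. On the other hand $x+1,x+\ell+1\in V^{\sf large}$ differ by $\ell$, so $\ell$ \emph{is} such a difference, and hence, again by Lemma~\ref{lem:nointdiffrep}, no two elements of $V^{\sf small}$ differ by exactly $\ell$. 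These two facts control the spacing of $V^{\sf large}$ from below and of $V^{\sf small}$ from above.

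Next I would argue from the top downwards, showing that the runs $R_1=\{x-\ell+1,\dots,x\},R_2,R_3,\dots$ of $V^{\sf small}$ all have length $\ell$, determining the interleaving elements of $V^{\sf large}$ along the way. Suppose inductively that $R_1,\dots,R_i$ are full runs of length $\ell$, and let $t$ be the largest element of $V^{\sf small}$ lying below $R_i$, i.e.\ the top of the next run $R_{i+1}$. The differences between $t$ and the $\ell$ elements of $R_i$ form a block of $\ell$ consecutive integers among the differences of $V^{\sf small}$; by Lemma~\ref{lem:nointdiffrep} this block is forbidden among the differences of $V^{\sf large}$, which forces the next elements of $V^{\sf large}$ to lie far enough away. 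I would then look at the smallest difference in $\{1,\dots,ab\}$ that is not yet represented as $u-s$ with $u\in V^{\sf large}$, $s\in V^{\sf small}$, and use the fact that each such difference occurs \emph{exactly once}: the spacing just established rules out every representation except $u=x+1$, $s=x+1-\delta$, which lands successively in the positions $t-1,t-2,\dots,t-\ell+1$. Thus $R_{i+1}$ must contain $\ell$ consecutive elements, and with Lemma~\ref{lem4} its length is exactly $\ell$.

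The delicate point — and the step I expect to be the main obstacle — is the coupling between the run structure of $V^{\sf small}$ and the gap structure of $V^{\sf large}$: at each stage I must simultaneously pin down where the next elements of $V^{\sf large}$ sit, so as to guarantee that the critical edge-difference has a unique representation forcing the run to extend. This is where both directions of the $\alpha$-valuation condition are needed — the injectivity supplied by Lemma~\ref{lem:nointdiffrep} and the surjectivity (bijectivity) of the edge-difference map — since, as small examples show, Lemma~\ref{lem:nointdiffrep} alone permits short runs locally. I would organise this bookkeeping through the polynomial identity
\[
\Bigl(\sum_{s\in V^{\sf small}} t^{\,x-s}\Bigr)\Bigl(\sum_{u\in V^{\sf large}} t^{\,u-x-1}\Bigr) \;=\; 1+t+\cdots+t^{\,ab-1},
\]
which is exactly equivalent to the graceful covering condition. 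Here the top run of length $\ell$ makes the lowest-degree terms of the first factor equal to $1+t+\cdots+t^{\ell-1}$, and comparing coefficients from the bottom up forces, one run at a time, each subsequent run of $V^{\sf small}$ to have length precisely $\ell$.
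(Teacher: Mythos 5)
Your setup is sound: reducing via Lemma~\ref{lem4} to ``no run shorter than $\ell$,'' extracting from Lemma~\ref{lem3} that $1,\dots,\ell-1$ cannot be internal differences of $V^{\sf large}$ while $\ell$ cannot be an internal difference of $V^{\sf small}$, and the polynomial identity is correct (it is just the statement that the external differences are $\{1,\dots,ab\}$, each once). But the decisive step is missing. In the top-down induction you assert that the smallest not-yet-represented difference $\delta$ ``rules out every representation except $u=x+1$, $s=x+1-\delta$''; nothing you have established excludes a representation $\delta=u-s$ with $u\geq x+\ell+1$ and $s$ lying much further down in $V^{\sf small}$, and eliminating exactly such representations is the entire difficulty --- it cannot follow from ``spacing'' of $V^{\sf large}$ because at that point you know nothing about $V^{\sf large}$ beyond its two smallest elements. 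The same gap reappears in the coefficient comparison: with $A(t)=\sum_{s}t^{\,x-s}$ and $B(t)=\sum_{u}t^{\,u-x-1}$, the identity $A(t)B(t)=1+t+\dots+t^{\,ab-1}$ forces, at each new ``event'' (e.g.\ degree $2\ell$), only that exactly one of the two corresponding coefficients of $A$ and $B$ equals $1$; the next feature may be a new run of $V^{\sf small}$ \emph{or} a new element of $V^{\sf large}$, so the runs are not determined ``one at a time'' without a simultaneous, branching induction on the structure of both sets --- which is essentially the full classification theorem and is nowhere carried out. You flag this coupling as the main obstacle yourself, and the proposal does not overcome it.

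For contrast, the paper sidesteps the global induction with an extremal contradiction argument: take $a$ to be the \emph{largest} element of $V^{\sf small}$ not lying in a run of length $\ell$ (so every element above $a$ already lies in a full run), let $b$ sit just below that short run, and examine the single difference $(x+1)-b$. Its representation $y-z$ is forced to have $y\geq x+\ell+1$ and $z>a$, hence $z$ lies in a full run; one then checks that $z+(a-b)\notin V^{\sf small}$ (else the difference $(x+1)-a$ repeats), deduces $z+(a-b)-\ell\in V^{\sf small}$, and obtains a repeat of the difference $(x+\ell+1)-a$, with the one exceptional case excluded by $a\leq d-2$. To salvage your approach you would need an argument of comparable strength at each inductive step; the extremal choice combined with the repeated-difference device is what supplies it.
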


\begin{proof}
Lemma \ref{lem4} establishes that no run in  $V^{\sf small}$ can have length greater than $\ell$.
Suppose that there is a run in  $V^{\sf small}$ having length less than $\ell$. 
Let $a$ be the largest element of $V^{\sf small}$ that does not occur in a run of $\ell$  consecutive elements. Let $b$ be the largest element less than $a$ that does not occur in the run containing $a$. Then $b+1, \dots ,a$ is a (maximal) run of length $a-b < \ell$.  

Consider the difference $(x + 1) - b$. This difference must occur in the $\alpha$-valuation as a 
difference $y-z$ with $y \in V^{\sf large}$ and $z \in V^{\sf small}$.
Also, we have
\[ z = y - (x+1) + b > y - (x+1) + a - \ell.\]
It is impossible that $y = x+1$ because then $z = b$  and $b \not\in V^{\sf small}$.
Hence, $y \geq x + \ell + 1$ and then $z>a$. 

If $z+(a-b) \in V^{\sf small}$ then \[y-(z+(a-b))=(y-z)-a+b=(x+1)-a.\] 
However, as $x+1 \in V^{\sf large}$, $a \in V^{\sf small}$ and $y \neq x+1$, this implies the  $\alpha$-valuation  has a repeat of the difference $x + 1 - a$. Hence, $z+(a-b) \not\in V^{\sf small}$.

We have $z > a$, so $z$ is contained in a run of $\ell$ consecutive elements of $V^{\sf small}$, say the run
$d, d+1, \dots ,d+\ell - 1$. Hence $a+2 \leq d \leq z \leq d + \ell - 1$. Also,
$z+(a-b) \geq d + \ell$ because $z+(a-b) \not\in V^{\sf small}$.
This implies $z+(a-b)-\ell \geq d$. Because $z+(a-b)-\ell < z$, 
we must have 
$z+(a-b)-\ell \in V^{\sf small}$. 

We have
\[y-(z+(a-b)-\ell)=x+1-b-a+b+\ell=x+\ell+1-a.\] As $x+\ell+1\in V^{\sf large}$, this too is a repeated difference in the $\alpha$-valuation unless $y = x+\ell+1$ and $z+(a-b)-\ell = a$. 

Assume $z+(a-b)-\ell = a$. Then $z + a-b = a + \ell$. We have $z+(a-b) \not\in V^{\sf small}$, so $a + \ell = z + a-b \geq d + \ell$.
This implies $a \geq d$, which is clearly impossible because $a \leq d-2$.
\end{proof}

We now consider the differences between consecutive runs of $V^{\sf small}$.
Suppose the smallest element of one run in  $V^{\sf small}$ is $s$ and the smallest element in the next run is $s+d$. We refer to this as a \emph{gap of length $d$}. Note that a gap of length $d$ means that there are $d-\ell$ consecutive elements that are \emph{not} in $V^{\sf small}$, namely, $s+\ell, \dots , s+d-1$.
 
\begin{lemma}\label{lem6}
Under the same hypotheses as Theorem {\rm \ref{classify.thm}}, $\mathcal{V}$ is equivalent to an $\alpha$-valuation in which the gaps between consecutive runs of elements of $V^{\sf small}$ all have lengths that are  multiples of $\ell$. %If there is an ``initial gap'' starting at $0$, then this gap also has a length that is a multiple of $\ell$.
\end{lemma}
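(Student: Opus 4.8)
The plan is to exploit the defining property of a $\beta$-valuation together with the run structure already in hand, rather than to argue gap-by-gap. Recall from Lemmas \ref{lem4} and \ref{lem5} that, for the representative under consideration, $V^{\sf small}$ is a disjoint union of runs of length \emph{exactly} $\ell$; write the run-starts as $0 = s_1 < s_2 < \dots < s_m$, where $s_1 = 0$ because $0 \in V^{\sf small}$ (Lemma \ref{lemab}) is the smallest label and so heads the first run $\{0,\dots,\ell-1\}$. Since this is an $\alpha$-valuation, every $u \in V^{\sf large}$ exceeds every $v \in V^{\sf small}$, so the $ab$ edge labels are exactly the positive differences $u-v$, and by definition these are precisely $1,2,\dots,ab$, each occurring once. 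The entire argument is then just a regrouping of this difference set according to the runs of $V^{\sf small}$.

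The key step is the regrouping itself. Fix $u \in V^{\sf large}$ and a run index $i$. As $v$ ranges over the $\ell$ consecutive labels $s_i, s_i+1, \dots, s_i+\ell-1$ of run $i$, the differences $u-v$ form the block $B_{u,i} = \{u - s_i - \ell + 1, \dots, u - s_i\}$ of $\ell$ \emph{consecutive} integers whose largest element is $u - s_i$. Because every difference $u - v$ occurs exactly once overall, the map $(u,v)\mapsto(u,\operatorname{run}(v))$ shows the blocks $\{B_{u,i}: u \in V^{\sf large},\ 1 \le i \le m\}$ are pairwise disjoint and partition $\{1,\dots,ab\}$ into $|V^{\sf large}|\cdot m = ab/\ell$ blocks, each a set of $\ell$ consecutive integers.

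I would then invoke a simple rigidity fact: the only partition of $\{1,\dots,ab\}$ into blocks of $\ell$ consecutive integers is the aligned one $\{1,\dots,\ell\},\{\ell+1,\dots,2\ell\},\dots$, proved by induction from below, since the block containing the smallest as-yet-uncovered element is forced to begin there. Hence each $B_{u,i}$ equals $\{(k-1)\ell+1,\dots,k\ell\}$ for some $k$, so its largest element $u - s_i$ is a multiple of $\ell$. Thus $u \equiv s_i \pmod{\ell}$ for every $u \in V^{\sf large}$ and every $i$; fixing a single $u$ forces all the $s_i$ to share one residue mod $\ell$, and since $s_1 = 0$ that residue is $0$. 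Consequently every gap $s_{i+1}-s_i$ is a multiple of $\ell$, which is the assertion. The main thing to get right is the bookkeeping in the middle step, namely that each run contributes a \emph{block of consecutive} difference-values and that these blocks inherit the ``each value exactly once'' property; once that is set up, the rigidity of interval tilings closes the argument at once. (As a sanity check, the same conclusion drops out of the generating-function identity $L(z)\,R(1/z) = z^{\ell} + z^{2\ell} + \dots + z^{ab}$ obtained by writing $S(z) = (1+z+\dots+z^{\ell-1})R(z)$ and cancelling the factor $1+z+\dots+z^{\ell-1}$ from the difference equation.)
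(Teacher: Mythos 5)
Your proof is correct, and it takes a genuinely different (and somewhat stronger) route than the paper's. Both arguments rest on the same core observation --- each pair consisting of an element $u\in V^{\sf large}$ and a run of $V^{\sf small}$ contributes a block of $\ell$ consecutive differences, and these blocks partition $\{1,\dots,ab\}$ --- but the paper exploits it locally and by contradiction: assuming a gap of length $d\not\equiv 0 \pmod{\ell}$, it isolates the $d-\ell$ consecutive differences $x+1-z$ for $z$ ranging over the gap (its set $\mathcal{D}$), which sit between two blocks generated by $x+1$, and argues that $\mathcal{D}$ would have to be tiled by disjoint length-$\ell$ blocks, which is impossible. You instead apply the rigidity of interval tilings globally, concluding that every block $B_{u,i}$ is an aligned interval $\{(k-1)\ell+1,\dots,k\ell\}$, whence $u\equiv s_i\pmod{\ell}$ for every $u\in V^{\sf large}$ and every run-start $s_i$, with $s_1=0$ anchoring the common residue at $0$. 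What your version buys is economy downstream: the congruences $s_i\equiv 0\pmod{\ell}$ and $u\equiv 0\pmod{\ell}$ are precisely the conclusions of Lemmas \ref{lem7} and \ref{lem8}, so your single argument subsumes the paper's Lemmas \ref{lem6}--\ref{lem8} (the paper's Lemma \ref{lem8} otherwise needs a separate, more delicate repeated-difference argument). Your bookkeeping is sound: the blocks are pairwise disjoint exactly because each difference occurs once, and the rigidity lemma follows by the induction you indicate, since the block containing the least uncovered element cannot extend below it. The generating-function identity in your parenthetical is a valid restatement of the same partition.
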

\begin{proof}
Suppose there is a gap of length $d$ between two consecutive runs in $V^{\sf small}$, where $d \not\equiv 0 \bmod \ell$. 
Let these runs consist of elements \[s, \dots , s+\ell-1 \quad \text{and} \quad t, \dots , t+\ell-1,\] where $t = s+d$. The
 $d-\ell$ consecutive elements  $s+\ell, \dots , s+d-1$ are  {not} in $V^{\sf small}$.

Consider the $d-\ell$ consecutive differences in the set 
\[\mathcal{D} = \{x+1 - (s+\ell), \dots , x+1 - (t-1)\}.\]
For every difference $y -z$ with $y \in V^{\sf large}$ and $z \in V^{\sf small}$, we obtain $\ell$ consecutive differences because $z$ is contained in a run of size $\ell$. It is clearly impossible to cover all the differences in $\mathcal{D}$ with disjoint runs of $\ell$ consecutive differences, because $d \not\equiv 0 \bmod \ell$. 
%
%Similar reasoning also applies if the first run starts at $t$; in this case, it can be shown that $t \equiv 0 \bmod \ell$.
\end{proof}

Because all the runs of elements of $V^{\sf small}$ have length $\ell$ and  $|V^{\sf small}| = a$, it follows that $a \equiv 0 \bmod \ell$.

\begin{lemma} \label{lem7}
Under the same hypotheses as Theorem {\rm \ref{classify.thm}}, $\mathcal{V}$ is equivalent to an $\alpha$-valuation where $x$ is the largest element $x \in V^{\sf small}$ satisfies the congruence $x+1 \equiv 0 \bmod \ell$. Further, the largest element in any run in  $V^{\sf small}$ is congruent to $-1$ modulo $\ell$.
\end{lemma}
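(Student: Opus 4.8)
The plan is to show that the \emph{same} equivalent $\alpha$-valuation produced in Lemmas~\ref{lem3}--\ref{lem6} already satisfies the claimed congruences, so that no further application of $\Phi_{ab}$ is needed. From those lemmas I may assume that $V^{\sf small}$ is a disjoint union of runs of \emph{exactly} $\ell$ consecutive integers (Lemma~\ref{lem5}), that the gaps between consecutive runs --- measured as the differences of their smallest elements, as defined just before Lemma~\ref{lem6} --- are all multiples of $\ell$ (Lemma~\ref{lem6}), and that $0 \in V^{\sf small}$ (Lemma~\ref{lemab}).

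First I would pin down the bottom run. Since $0$ is the minimum element of $V^{\sf small}$ and every run has length exactly $\ell$, the run containing $0$ must be $\{0,1,\dots,\ell-1\}$; in particular its smallest element is $0$, which is $\equiv 0 \bmod \ell$. This gives the base case for the induction.

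Next I would order the runs by increasing smallest element, say $s_1 = 0 < s_2 < \dots < s_m$, and induct on $i$. If $s_i \equiv 0 \bmod \ell$, then by Lemma~\ref{lem6} the gap $s_{i+1}-s_i$ is a multiple of $\ell$, whence $s_{i+1} \equiv 0 \bmod \ell$ as well. Thus every run has smallest element divisible by $\ell$, and therefore its largest element $s_i + \ell - 1 \equiv -1 \bmod \ell$; this is exactly the second assertion. Finally, since $x$ is the largest element of $V^{\sf small}$, it is the largest element of the topmost run $\{s_m,\dots,s_m+\ell-1\}$, so $x = s_m + \ell - 1 \equiv -1 \bmod \ell$, i.e.\ $x+1 \equiv 0 \bmod \ell$, giving the first assertion.

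I do not anticipate a genuine obstacle here: the statement is a bookkeeping consequence of the run-and-gap structure already established. The only points requiring a little care are verifying that the bottom run starts precisely at $0$ (so the induction has the correct base case), and recalling that ``gap length'' was defined via the smallest elements of consecutive runs, so that Lemma~\ref{lem6} feeds directly into the inductive step without any reindexing.
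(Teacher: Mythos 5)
Your proposal is correct and follows essentially the same route as the paper: both arguments use the facts that the first run starts at $0$, all runs have length exactly $\ell$, and all gaps are multiples of $\ell$ to conclude that every run starts at a multiple of $\ell$ and hence ends at an element congruent to $-1$ modulo $\ell$. The only difference is that you spell out the induction over the runs explicitly, which the paper leaves implicit.
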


\begin{proof}
All the runs in $V^{\sf small}$ have length $\ell$ and all the gaps have lengths that are a multiple of $\ell$. The first run starts at $0$, from Lemma \ref{lemab}.
Hence  every run of elements of $V^{\sf small}$ has the form $d, d+1, d+\ell-1$ where $d \equiv 0 \bmod \ell$. 
Because the last run is $x - \ell + 1, \dots, x$, it follows that $x - \ell + 1 \equiv 0 \bmod \ell$, or $x+1 \equiv 0 \bmod \ell$. \end{proof}

\begin{lemma}\label{lem8}
Under the same hypotheses as Theorem {\rm \ref{classify.thm}}, $\mathcal{V}$ is equivalent to an $\alpha$-valuation in which the elements of $V^{\sf large}$ are all multiples of $\ell$.
\end{lemma}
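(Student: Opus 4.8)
The plan is to cash in the complete description of the small side that the previous lemmas have now assembled. By Lemmas \ref{lem5}--\ref{lem7}, the valuation $\mathcal{V}$ may be taken so that $V^{\sf small}$ is a disjoint union of $a/\ell$ runs, each of the form $\{k\ell, k\ell+1, \dots, k\ell+\ell-1\}$ for some nonnegative integer $k$. Since $\mathcal{V}$ is an $\alpha$-valuation of a graph with $ab$ edges and every edge joins a vertex of $V^{\sf small}$ to one of $V^{\sf large}$, the differences $\{\,u-z : u \in V^{\sf large},\ z \in V^{\sf small}\,\}$ are exactly $\{1,2,\dots,ab\}$, each occurring once. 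The idea is to read the structure of $V^{\sf large}$ off this difference set.

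First I would fix a large vertex $u \in V^{\sf large}$ together with a single run $R = \{k\ell, \dots, k\ell+\ell-1\}$ of $V^{\sf small}$, and observe that the differences $u-z$ for $z \in R$ are the $\ell$ consecutive integers $\{u-k\ell-\ell+1, \dots, u-k\ell\}$, whose largest element is $u-k\ell$. Letting $u$ range over the $b$ elements of $V^{\sf large}$ and $R$ over the $a/\ell$ runs of $V^{\sf small}$ produces $b\cdot(a/\ell)$ such blocks, each of size $\ell$. Their union is all of $\{1,\dots,ab\}$, and their total cardinality is $\ell\cdot b\cdot(a/\ell)=ab$; hence the blocks are pairwise disjoint and partition $\{1,2,\dots,ab\}$ into runs of $\ell$ consecutive integers.

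Next I would invoke the uniqueness of such a partition: the integer $1$ must lie in one of the runs, and because no smaller positive integer is available that run is forced to be $\{1,\dots,\ell\}$; deleting it and repeating shows that the only way to partition $\{1,\dots,ab\}$ into runs of length $\ell$ is the natural one, $\{1,\dots,\ell\},\{\ell+1,\dots,2\ell\},\dots$ Consequently each block $\{u-k\ell-\ell+1,\dots,u-k\ell\}$ coincides with some $\{(j-1)\ell+1,\dots,j\ell\}$, so its top element satisfies $u-k\ell=j\ell$. Therefore $u=(j+k)\ell\equiv 0 \bmod \ell$, and since $u \in V^{\sf large}$ was arbitrary, every element of $V^{\sf large}$ is a multiple of $\ell$, which is the claim.

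No new application of $\Phi_{ab}$ is needed — the valuation supplied by the earlier lemmas already works — so the only points requiring care are the two bookkeeping facts: that each pair $(u,R)$ contributes a block of exactly $\ell$ consecutive differences (immediate, since the elements of a run are consecutive) and that these blocks exhaust $\{1,\dots,ab\}$ without overlap (immediate from the graceful property together with the cardinality count). The sole genuinely structural ingredient is the uniqueness of the length-$\ell$ partition of an initial segment of the integers; I expect this to be the main conceptual step, although it is elementary via the greedy argument from the bottom.
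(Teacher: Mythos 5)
Your proof is correct, and it takes a genuinely different route from the paper's. The paper argues locally by contradiction: it takes the smallest element $w \in V^{\sf large}$ not divisible by $\ell$, writes $w = x+1+i\ell+d$ with $0<d<\ell$, traces where the difference $i\ell+1$ must occur, and manufactures a second occurrence of the difference $d+i\ell+1$, contradicting the graceful property; this requires some delicate bookkeeping (ruling out $y=x+1+i\ell$, deducing $z \equiv -1 \bmod \ell$, and invoking Lemmas \ref{lem5} and \ref{lem7} to find $z-d \in V^{\sf small}$). You instead argue globally: each pair consisting of a vertex $u \in V^{\sf large}$ and a run $\{k\ell,\dots,k\ell+\ell-1\}$ of $V^{\sf small}$ contributes the interval of differences $\{u-k\ell-\ell+1,\dots,u-k\ell\}$, the $b\cdot(a/\ell)$ such intervals partition $\{1,\dots,ab\}$ by the cardinality count, and the unique partition of $\{1,\dots,ab\}$ into length-$\ell$ intervals (forced greedily from the bottom) makes every top element $u-k\ell$, hence every $u$, a multiple of $\ell$. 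Both proofs lean on the same input — the full run/gap structure of $V^{\sf small}$ from Lemmas \ref{lem5}--\ref{lem7} — but your partition argument is cleaner and essentially case-free, at the modest cost of the extra (elementary) uniqueness-of-partition step; it also makes transparent why the conclusion holds, namely that the difference set factors through the quotient by runs, which is exactly the content of the {\sc Projection I} operation proved later in Theorem \ref{proj1.thm}.
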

\begin{proof}
We have shown that $x+1$, the smallest element in $V^{\sf large}$, is divisible by $\ell$. Suppose that $V^{\sf large}$ contains at least one element that is not divisible by $\ell$, and let $w$ be the smallest such element. We can write $w = x+1 + i \ell + d$, where
$0 < d < \ell$. Note that $w = x+1 + i \ell \not\in V^{\sf large}$, because then $d$ would be an internal difference in $V^{\sf large}$; this is not possible from Lemma \ref{lem:nointdiffrep} because $1, \dots , \ell-1$ are all internal differences in $V^{\sf small}$.

Consider the difference $i\ell+1$. This must occur as a difference $y - z$ with $y \in V^{\sf large}$ and $z \in V^{\sf small}$.
Note that $x+1 + i \ell - x = i\ell + 1$. We cannot have $y = x+1 + i \ell$, because $x+1 + i \ell \not\in V^{\sf large}$,
Hence $z < x$ and $y < x+1 + i \ell$, because $x$ is the largest element in $V^{\sf small}$. By assumption, $y$ must be a multiple of $\ell$, so $z \equiv -1 \bmod \ell$. It follows from the Lemmas \ref{lem5} and \ref{lem7} that $z - d \in V^{\sf small}$. We have
$y - (z-d) = d + i\ell + 1  = w - x$. Because $y < x+1 + i \ell < w$, we have two occurrences of the difference $d + i\ell + 1$, which is impossible. We conclude that all elements of $V^{\sf large}$ are  multiples of $\ell$.
\end{proof}

Now we can complete the proof of our main theorem. 

\begin{proof}[Proof of Theorem \ref{classify.thm}] %We now observe that Theorem \ref{classify.thm}
This is an immediate consequence of Lemmas \ref{lemab}--\ref{lem8}.
\end{proof}

\medskip

If we apply $\Phi_{ab}$ to an $\alpha$-valuation of $K_{a,b}$ having the properties stated in Theorem \ref{classify.thm}, we obtain an equivalent $\alpha$-valuation that satisfies the following properties.

\begin{theorem}
\label{classify2.thm}
Let $\mathcal{V}$ be an $\alpha$-valuation of $K_{a,b}$ with $ab>1$.  Then there exists a positive integer $\ell>1$ such that $\mathcal{V}$ is equivalent to an $\alpha$-valuation with the following properties:
\begin{enumerate}
\item The elements of $V^{\sf small}$ are all multiples of $\ell$.
\item The set $V^{\sf large}$ is a union of ``runs'' of $\ell$ consecutive integers that \emph{end} with a multiple of $\ell$.
\end{enumerate}
\end{theorem}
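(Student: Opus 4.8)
The plan is to deduce this theorem directly from Theorem \ref{classify.thm} by pushing an $\alpha$-valuation with the structure guaranteed there through the order-reversing involution $\Phi_{ab}$. By Theorem \ref{classify.thm}, there is an integer $\ell \geq 2$ such that $\mathcal{V}$ is equivalent to an $\alpha$-valuation $\mathcal{V}'$ whose large part consists entirely of multiples of $\ell$, and whose small part is a disjoint union of runs of $\ell$ consecutive integers, each run beginning with a multiple of $\ell$. I would set $\mathcal{V}'' = \Phi_{ab}(\mathcal{V}')$ and verify that $\mathcal{V}''$ has the two required properties. Since $\mathcal{V}$ is equivalent to $\mathcal{V}'$, we have $\mathcal{V}' \in \{\mathcal{V}, \Phi_{ab}(\mathcal{V})\}$, and because $\Phi_{ab}$ is an involution this forces $\mathcal{V}'' \in \{\Phi_{ab}(\mathcal{V}), \mathcal{V}\}$; hence $\mathcal{V}''$ is equivalent to $\mathcal{V}$, which is what the statement requires.

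Two preliminary facts feed into the verification. First, by the observation immediately following Lemma \ref{lem6}, $a \equiv 0 \bmod \ell$, and therefore $ab \equiv 0 \bmod \ell$ as well; this is the only arithmetic input needed. Second, by the remark following Definition \ref{equiv.def}, $\Phi_{ab}$ carries the small part of $\mathcal{V}'$ to the large part of $\mathcal{V}''$ and the large part of $\mathcal{V}'$ to the small part of $\mathcal{V}''$. Explicitly, $V^{\sf small}$ of $\mathcal{V}''$ equals $\{ab - v : v \in V^{\sf large}\ \text{of}\ \mathcal{V}'\}$, and $V^{\sf large}$ of $\mathcal{V}''$ equals $\{ab - s : s \in V^{\sf small}\ \text{of}\ \mathcal{V}'\}$.

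The verification then splits into the two claims. For property 1, every element $v$ of $V^{\sf large}$ of $\mathcal{V}'$ is a multiple of $\ell$, and $ab$ is a multiple of $\ell$, so each $ab - v$ is a multiple of $\ell$; thus every element of $V^{\sf small}$ of $\mathcal{V}''$ is a multiple of $\ell$. For property 2, a run $d, d+1, \dots, d+\ell-1$ in $V^{\sf small}$ of $\mathcal{V}'$ with $d \equiv 0 \bmod \ell$ is sent by $\Phi_{ab}$ to the set $\{ab - d - \ell + 1, \dots, ab - d\}$, again $\ell$ consecutive integers but now with largest element $ab - d$; since both $ab$ and $d$ are multiples of $\ell$, this run ends with a multiple of $\ell$. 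As $\Phi_{ab}$ is a bijection, the images of distinct runs stay disjoint, so $V^{\sf large}$ of $\mathcal{V}''$ is a union of runs of exactly the required form.

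I expect no serious obstacle, since the statement is essentially the image of Theorem \ref{classify.thm} under the involution $\Phi_{ab}$. The only points requiring care are bookkeeping: confirming that order reversal turns ``a run starting with a multiple of $\ell$'' into ``a run ending with a multiple of $\ell$,'' and recording that $ab$ itself (not merely the run endpoints) is divisible by $\ell$, so that the congruences survive subtraction from $ab$.
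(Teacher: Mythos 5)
Your proposal is correct and takes exactly the paper's route: the paper derives Theorem \ref{classify2.thm} by applying $\Phi_{ab}$ to the type I $\alpha$-valuation produced by Theorem \ref{classify.thm}, which is precisely your argument. You simply spell out the bookkeeping (that $ab\equiv 0 \bmod \ell$ and that order reversal turns runs starting at multiples of $\ell$ into runs ending at multiples of $\ell$) that the paper leaves implicit.
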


We will call an $\alpha$-valuation that satisfies the properties listed in Theorem \ref{classify.thm} a \emph{type I} $\alpha$-valuation, while one that  satisfies the properties listed in Theorem \ref{classify2.thm} will be termed a \emph{type II} $\alpha$-valuation.

\begin{example}
Suppose $a = b = 3$, $V^{\sf small} = \{0,1,2\}$ and $V^{\sf large} = \{3,6,9\}$. This is a type I $\alpha$-valuation of $K_{3,3}$ with $\ell = 3$. It is equivalent to the type II $\alpha$-valuation consisting of $V^{\sf small} = \{0,3,6\}$ and $V^{\sf large} = \{7,8,9\}$. 
\end{example}

We describe two ``projection'' operations.

\begin{theorem}[{\sc Projection I}]
\label{proj1.thm}
Suppose $V^{\sf small}$ and $V^{\sf large}$ form a type I $\alpha$-valuation $\mathcal{V}$ of $K_{a,b}$. Replace every element $y \in V^{\sf large}$ by $y /\ell$ and call the resulting set $W^{\sf large}$.
Also, replace every run $R = i \ell, i \ell+1, \dots, i \ell + \ell - 1$ in $V^{\sf small}$ by the single element $i$ and call the resulting set $W^{\sf small}$. Then $W^{\sf small}$ and $W^{\sf large}$ form an $\alpha$-valuation $\mathcal{W}$ of $K_{a/\ell,b}$.
\end{theorem}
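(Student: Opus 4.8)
The plan is to verify directly that $\mathcal{W} = (W^{\sf small}, W^{\sf large})$ meets the three requirements of an $\alpha$-valuation of $K_{a/\ell,b}$: the labels are distinct and lie in $\{0,1,\dots,ab/\ell\}$, they are separated by a threshold, and the induced edge labels are precisely $\{1,2,\dots,ab/\ell\}$. First I would record the sizes. Since every element of $V^{\sf small}$ lies in a run of length exactly $\ell$ (Lemma \ref{lem5}) and $|V^{\sf small}| = a$, there are $a/\ell$ runs, so $|W^{\sf small}| = a/\ell$; and since $y \mapsto y/\ell$ is injective on multiples of $\ell$, $|W^{\sf large}| = b$. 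Thus $\mathcal{W}$ labels the $a/\ell + b$ vertices of $K_{a/\ell,b}$, a graph with $(a/\ell)b = ab/\ell$ edges, so the target label set is $\{0,\dots,ab/\ell\}$, exactly as needed.

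For the label range and the $\alpha$-property, I would invoke Lemma \ref{lem7}: the largest element $x \in V^{\sf small}$ satisfies $x+1 \equiv 0 \bmod \ell$, and its run is $x-\ell+1,\dots,x$, so its image in $W^{\sf small}$ is $(x+1)/\ell - 1$, and this bounds all of $W^{\sf small}$ from above. The smallest element of $V^{\sf large}$ is $x+1$, with image $(x+1)/\ell$, and by Lemma \ref{lemab} the largest element of $V^{\sf large}$ is $ab$, with image $ab/\ell$. Hence $W^{\sf small} \subseteq \{0,\dots,(x+1)/\ell - 1\}$ and $W^{\sf large} \subseteq \{(x+1)/\ell,\dots,ab/\ell\}$: the two sets are disjoint, all labels lie in $\{0,\dots,ab/\ell\}$, and the threshold $x' = (x+1)/\ell - 1$ witnesses the $\alpha$-condition.

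The heart of the argument is the difference count, and this is where I expect the real work. I would set up an explicit bijection between the edges of $\mathcal{W}$ and those edges of $\mathcal{V}$ whose difference is divisible by $\ell$. Write a large label as $u = \ell w$ and a small element as $s = i\ell + r$ with $0 \leq r < \ell$, where $i$ indexes its run. Then the $\mathcal{V}$-difference is $u - s = \ell(w-i) - r$, which is divisible by $\ell$ if and only if $r = 0$, that is, exactly when $s = i\ell$ begins its run. Each such $\mathcal{V}$-edge $(\ell w, i\ell)$ corresponds to the $\mathcal{W}$-edge joining $w \in W^{\sf large}$ to $i \in W^{\sf small}$, whose difference is $w - i = (u-s)/\ell$. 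Injectivity of this correspondence is immediate, and surjectivity is precisely the observation just made that a $\mathcal{V}$-edge of difference divisible by $\ell$ must have its small endpoint at the start of a run; so it is a genuine bijection, under which the $\mathcal{W}$-difference is the $\mathcal{V}$-difference divided by $\ell$.

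Finally I would transport the difference property of $\mathcal{V}$ through this bijection. In $\mathcal{V}$ the edge differences are exactly $\{1,2,\dots,ab\}$, each occurring once, so the differences divisible by $\ell$ are exactly $\ell, 2\ell, \dots, ab$, each occurring once; dividing by $\ell$, the differences of $\mathcal{W}$ are exactly $1,2,\dots,ab/\ell$, each occurring once. Combined with the label and threshold checks above, this shows $\mathcal{W}$ is an $\alpha$-valuation of $K_{a/\ell,b}$. The main obstacle is establishing the bijection cleanly: the subtle point is confirming that the $\mathcal{W}$-difference is genuinely $(u-s)/\ell$ and not some shifted quantity, and that surjectivity holds, both of which hinge on the ``start of a run'' characterization of divisibility by $\ell$.
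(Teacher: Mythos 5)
Your proof is correct and follows essentially the same route as the paper's: verify the sizes, the label ranges and the threshold, and then relate the difference multiset of $\mathcal{W}$ to that of $\mathcal{V}$ using the run structure of a type~I valuation. The only (cosmetic) difference is that where the paper collapses each block of $\ell$ consecutive $\mathcal{V}$-differences arising from a large element and a run into a single $\mathcal{W}$-difference and says the conclusion is ``easy to see,'' you make that step explicit by selecting from each block its unique multiple of $\ell$ (the edge whose small endpoint starts a run) and dividing by $\ell$ --- a slightly more careful rendering of the same argument.
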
 
\begin{proof}
It is clear that $W^{\sf small} \subseteq \{0, \dots , m/ \ell - 1 \}$ and $W^{\sf large} \subseteq \{m/ \ell, \dots , ab/ \ell \}$.
Further, $|W^{\sf small}| = |V^{\sf small}|/\ell = a/\ell$ and $|W^{\sf large}| = |V^{\sf large}| = b$. Now, consider  the differences in the valuation $\V$ obtained from an element $y \in V^{\sf large}$ and the run $R$:
\[ y - i \ell, y - (i \ell+1), \dots , y - (i \ell + \ell - 1).\] In the valuation $\W$, we get the single difference $y/\ell - i$.
We know that
\[ \{ y - r : y \in V^{\sf large}, r \in V^{\sf small}\} = \{1, \dots , ab\}.\]
It is therefore easy to see that
\[ \{ y' - z : y' \in W^{\sf large}, z \in W^{\sf small} \} = \{1, \dots , ab/\ell\}.\]
Finally, every element of $W^{\sf large}$ is greater than every element of $W^{\sf small}$.
Hence, $W^{\sf small}$ and $W^{\sf large}$ form an $\alpha$-valuation $\W$ of $K_{a/\ell,b}$.
\end{proof}

The following result is proven in a similar manner. We leave the details for the reader to verify.

\begin{theorem}[{\sc Projection II}]
\label{proj2.thm}
Suppose $V^{\sf small}$ and $V^{\sf large}$ form a type II  $\alpha$-valuation $\mathcal{V}$ of $K_{a,b}$. Replace every element $y \in V^{\sf small}$ by $y /\ell$ and call the resulting set $W^{\sf small}$.
Then, replace every run $i \ell - \ell + 1, i \ell - \ell + 2, \dots, i \ell$ in $V^{\sf large}$ by the single element $i$ and call the resulting set $W^{\sf large}$. Then $W^{\sf small}$ and $W^{\sf large}$ form an $\alpha$-valuation $\mathcal{W}$ of $K_{a,b/\ell}$.
\end{theorem}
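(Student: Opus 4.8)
The plan is to mirror the proof of Theorem~\ref{proj1.thm} (\textsc{Projection I}) with the roles of $V^{\sf small}$ and $V^{\sf large}$ interchanged, exploiting the fact that a type II $\alpha$-valuation is exactly the image of a type I one under $\Phi_{ab}$. First I would record the cardinality and containment bookkeeping: since $V^{\sf small}$ consists of multiples of $\ell$ and $V^{\sf large}$ is a disjoint union of length-$\ell$ runs each ending in a multiple of $\ell$, dividing the $V^{\sf small}$ elements by $\ell$ and collapsing each run in $V^{\sf large}$ to its top element divided by $\ell$ yields $|W^{\sf small}| = a$ and $|W^{\sf large}| = |V^{\sf large}|/\ell = b/\ell$. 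One checks $W^{\sf small} \subseteq \{0,\dots,ab/\ell\}$ with $W^{\sf small}$ lying below $W^{\sf large}$, so the bipartition structure needed for an $\alpha$-valuation is preserved.

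The heart of the argument is that the edge differences are exactly $\{1,\dots,ab/\ell\}$. Here I would track what happens to a single $V^{\sf small}$ element $z$ against a run $R = i\ell-\ell+1,\dots,i\ell$ in $V^{\sf large}$: the $\ell$ differences
\[
(i\ell-\ell+1) - z,\ (i\ell-\ell+2)-z,\ \dots,\ i\ell - z
\]
form $\ell$ consecutive integers, and in $\mathcal{W}$ these collapse to the single difference $i - z/\ell$. Running over all $z \in V^{\sf small}$ and all runs, the original valuation $\mathcal{V}$ supplies every value in $\{1,\dots,ab\}$ exactly once; grouping these into the blocks of $\ell$ consecutive differences contributed by each $(z,R)$ pair, and dividing through, gives each value in $\{1,\dots,ab/\ell\}$ exactly once in $\mathcal{W}$.

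The step requiring the most care is verifying that the $\ell$ consecutive differences arising from a fixed pair $(z,R)$ genuinely tile $\{1,\dots,ab\}$ without collision across different pairs, so that the collapse is well defined and bijective rather than merely surjective. This is where the type II structure is essential: because every $V^{\sf large}$ run ends at a multiple of $\ell$ and every $V^{\sf small}$ element is a multiple of $\ell$, the block of $\ell$ consecutive differences from $(z,R)$ occupies a residue-aligned window, namely the integers whose value is congruent to $i\ell - z$ down through $i\ell - z - \ell + 1$. Since $\mathcal{V}$ already realizes each difference uniquely, these windows must partition $\{1,\dots,ab\}$ into $ab/\ell$ blocks of size $\ell$, and the quotient map $d \mapsto \lceil d/\ell\rceil$ (equivalently $d \mapsto i - z/\ell$ on each block) is a bijection onto $\{1,\dots,ab/\ell\}$. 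I would remark that this is formally the $\Phi_{ab}$-conjugate of the computation already carried out in Theorem~\ref{proj1.thm}, which is why the details are safely left to the reader.
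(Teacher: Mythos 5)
Your proposal is correct and follows essentially the same route as the paper: the paper proves {\sc Projection I} by observing that each (run, vertex) pair contributes $\ell$ consecutive differences that collapse to a single difference after dividing by $\ell$, and explicitly leaves {\sc Projection II} to the reader as the symmetric ($\Phi_{ab}$-conjugate) computation, which is exactly what you carry out. Your additional observation that the type II structure forces each block of $\ell$ consecutive differences to end at a multiple of $\ell$, so the blocks are residue-aligned and automatically tile $\{1,\dots,ab\}$, is a correct and slightly more careful justification of the step the paper calls ``easy to see.''
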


%We will call the process described in Theorem \ref{proj1.thm} and Theorem \ref{proj2.thm} \emph{projection}.
After having computed $W^{\sf small}$ and $W^{\sf large}$ using a projection operation, we have a ``smaller'' $\alpha$-valuation. We can then project this smaller $\alpha$-valuation. The process continues until we reach a trivial $\alpha$-valuation of $K_{1,1}$.

\begin{example}
Again, suppose $a = b = 3$, $V^{\sf small} = \{0,1,2\}$ and $V^{\sf large} = \{3,6,9\}$. This is a type I $\alpha$-valuation of $K_{3,3}$ with $\ell = 3$. If we project this $\alpha$-valuation, we obtain the $\alpha$-valuation consisting of
$W^{\sf small} = \{0\}$ and $W^{\sf large} = \{1,2,3\}$. This is a type II $\alpha$-valuation of $K_{1,3}$ with $\ell = 3$. If we then project this  $\alpha$-valuation, we obtain the $\alpha$-valuation  of $K_{1,1}$ consisting of $X^{\sf small} = \{0\}$ and 
$X^{\sf large} = \{1\}$.
\end{example}

\subsection{Blowup Operations}

The lexicographic product $G \boldsymbol{\cdot} K_{\ell}^c$
replaces every vertex of a graph $G$ by $\ell$ independent vertices and it replaces every edge $xy$ by $\ell^2$ edges joining the $\ell$ copies of $x$ to the $\ell$ copies of $y$.
In \cite{PS24} two ``blowing-up'' operations were described, which were used to prove the following theorem.

\begin{theorem}
\label{lex.thm}
Suppose a graph $G$ has an $\alpha$-valuation, and let $\ell \geq 2$. 
Then $G \boldsymbol{\cdot} K_{\ell}^c$ has an $\alpha$-valuation.
\end{theorem}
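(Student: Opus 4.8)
The plan is to exhibit an explicit $\alpha$-valuation of $G \boldsymbol{\cdot} K_\ell^c$ directly from the given one. Write $\mathcal{V} = (V^{\sf small}, V^{\sf large})$ for the $\alpha$-valuation of $G$, let $n$ be the number of edges of $G$, and let $x$ be its threshold, so the small labels lie in $\{0,\dots,x\}$ and the large labels in $\{x+1,\dots,n\}$; recall that $G \boldsymbol{\cdot} K_\ell^c$ has exactly $\ell^2 n$ edges. Denote the $\ell$ copies of a vertex $v$ by $v_0,\dots,v_{\ell-1}$. I would define a labelling $\mathcal{V}'$ by spreading out the copies as follows: a small vertex with label $s$ gives
\[ \mathcal{V}'(v_j) = s\ell^2 + j, \qquad j = 0,\dots,\ell-1, \]
while a large vertex with label $t$ gives
\[ \mathcal{V}'(v_k) = t\ell^2 - k\ell, \qquad k = 0,\dots,\ell-1. \]
The key design choice is the different scalings ($+j$ versus $-k\ell$) on the two sides; getting these exponents right is what makes the edge-label blocks tile correctly, and finding them is the real insight of the construction.

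The heart of the argument, and the step I expect to be the main obstacle, is verifying that the edge labels of $G \boldsymbol{\cdot} K_\ell^c$ are \emph{precisely} $\{1,\dots,\ell^2 n\}$. Since $\mathcal{V}$ is an $\alpha$-valuation, $G$ is bipartite with parts $V^{\sf small}, V^{\sf large}$, so every edge of the blowup joins a copy $v_j$ of a small vertex (label $s$) to a copy $w_k$ of an adjacent large vertex (label $t$), and its label is
\[ (t\ell^2 - k\ell) - (s\ell^2 + j) = (t-s)\ell^2 - (k\ell + j). \]
As $(j,k)$ ranges over $\{0,\dots,\ell-1\}^2$, the quantity $k\ell + j$ runs bijectively over $\{0,\dots,\ell^2-1\}$, so the $\ell^2$ edge labels produced by the single original edge $vw$ form exactly the block $\{(d-1)\ell^2 + 1,\dots, d\ell^2\}$, where $d = t-s$ is the original edge label. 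Because $\mathcal{V}$ is in particular a $\beta$-valuation, the values $d$ range over $\{1,\dots,n\}$ each exactly once; hence these $n$ blocks are pairwise disjoint and tile $\{1,\dots,\ell^2 n\}$, giving the $\ell^2 n$ distinct edge labels required.

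It then remains to check the routine conditions: that $\mathcal{V}'$ is one-to-one into $\{0,\dots,\ell^2 n\}$ and satisfies the $\alpha$-condition. The small labels lie in the disjoint length-$\ell$ intervals $[s\ell^2, s\ell^2 + \ell - 1]$ and the large labels in the disjoint intervals $[t\ell^2 - (\ell-1)\ell,\, t\ell^2]$, giving injectivity within each part; moreover $0 \le s\ell^2 + j$ and $t\ell^2 - k\ell \le n\ell^2 = \ell^2 n$, so every label lies in $\{0,\dots,\ell^2 n\}$. Since the largest small label is at most $x\ell^2 + \ell - 1$ and the smallest large label is at least $(x+1)\ell^2 - (\ell-1)\ell = x\ell^2 + \ell$, the two parts are separated by the threshold $x' = x\ell^2 + \ell - 1$; this simultaneously yields global injectivity and shows that each edge of $G \boldsymbol{\cdot} K_\ell^c$ meets one vertex labelled at most $x'$ and one labelled more than $x'$. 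Combined with the block-tiling computation, this confirms that $\mathcal{V}'$ is an $\alpha$-valuation of $G \boldsymbol{\cdot} K_\ell^c$. (As a sanity check, one can verify that for $G = K_{1,1}$ this construction reproduces the canonical $\alpha$-valuation of $K_{\ell,\ell}$ of Example \ref{canonical.exam}.)
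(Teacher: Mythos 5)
Your proof is correct, and it is essentially the paper's approach: the paper defers to the two ``blowup'' operations of \cite{PS24}, and your single formula $s\mapsto s\ell^2+j$, $t\mapsto t\ell^2-k\ell$ is exactly the composition of {\sc Blowup II} followed by {\sc Blowup I} with the same $\ell$, written out in one step for a general bipartite $G$. The block-tiling verification and the threshold computation $x'=x\ell^2+\ell-1$ are the right checks and are carried out correctly.
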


When $G$ is a complete bipartite graph, the blowing-up operations described in \cite{PS24} constitute inverses to the two projection operations described above. We describe these now. 

\begin{theorem}[{\sc Blowup I}]
Suppose $W^{\sf small}$ and $W^{\sf large}$ form an $\alpha$-valuation of $K_{a,b}$. Let $\ell> 1$ be an integer.
Then there is an $\alpha$-valuation of $K_{\ell a,b}$.
\end{theorem}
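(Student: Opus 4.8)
The plan is to define the blowup explicitly as the inverse of the Projection I operation (Theorem~\ref{proj1.thm}) and then verify directly that it produces a (type I) $\alpha$-valuation. Given the $\alpha$-valuation $\W = (W^{\sf small}, W^{\sf large})$ of $K_{a,b}$, I would construct $\V = (V^{\sf small}, V^{\sf large})$ of $K_{\ell a, b}$ as follows: replace each element $w \in W^{\sf large}$ by $\ell w$ to form $V^{\sf large}$, and replace each element $z \in W^{\sf small}$ by the run of $\ell$ consecutive integers $\ell z, \ell z + 1, \dots, \ell z + \ell - 1$ to form $V^{\sf small}$.

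First I would record the bookkeeping. Since $\W$ is an $\alpha$-valuation of $K_{a,b}$, it has $ab$ edges, so $W^{\sf large}$ consists of $b$ labels in $\{0, \dots, ab\}$ with maximum $ab$, while $W^{\sf small}$ consists of $a$ labels including $0$. Under the construction, $|V^{\sf large}| = b$ and $|V^{\sf small}| = \ell a$, and the largest label produced is $\ell \cdot ab$, which is exactly the edge count of $K_{\ell a, b}$. Each run comprising $V^{\sf small}$ begins at a multiple of $\ell$, and every element of $V^{\sf large}$ is a multiple of $\ell$, so the output is manifestly of type I once the $\alpha$-valuation property is confirmed.

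The heart of the argument is the difference computation. For a fixed $w \in W^{\sf large}$ and the run arising from a fixed $z \in W^{\sf small}$, the corresponding edges of $K_{\ell a, b}$ contribute the $\ell$ differences
\[
\ell w - (\ell z + j) = \ell(w - z) - j, \qquad j = 0, 1, \dots, \ell - 1,
\]
which form the block $\{\ell(w-z) - \ell + 1, \dots, \ell(w-z)\}$. Writing $d = w - z$, as the pair $(w,z)$ ranges over $W^{\sf large} \times W^{\sf small}$ the value $d$ takes each integer in $\{1, 2, \dots, ab\}$ exactly once (this is precisely the graceful condition for $\W$). Hence the blocks are exactly $\{\ell d - \ell + 1, \dots, \ell d\}$ for $d = 1, \dots, ab$, and these partition $\{1, 2, \dots, \ell ab\}$, so the edge labels of $\V$ are precisely $\{1, \dots, \ell ab\}$. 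Finally the threshold condition holds: if $x = \max W^{\sf small}$ then the smallest element of $W^{\sf large}$ is $x+1$, so $\min V^{\sf large} = \ell(x+1) = \ell x + \ell$ while $\max V^{\sf small} = \ell x + \ell - 1$; thus every label of $V^{\sf large}$ exceeds every label of $V^{\sf small}$.

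I do not anticipate a genuine obstacle: the construction is forced by requiring it to invert Projection~I, and every claim reduces to the single observation that scaling differences by $\ell$ and inserting runs of length $\ell$ converts a tiling of $\{1, \dots, ab\}$ by singletons into a tiling of $\{1, \dots, \ell ab\}$ by consecutive blocks of size $\ell$. The only point requiring a moment's care is checking that distinct edges of $K_{\ell a, b}$ receive distinct labels, equivalently that the blocks do not overlap; this follows because the map $d \mapsto \{\ell d - \ell + 1, \dots, \ell d\}$ sends distinct integers to disjoint intervals.
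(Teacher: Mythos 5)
Your construction is exactly the one in the paper (multiply all labels by $\ell$, then expand each element of the small side into a run of $\ell$ consecutive integers), and your verification of the difference, cardinality, and threshold conditions is correct. The paper merely states this construction without the detailed checks, so your proposal is the same approach, just carried out more explicitly.
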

\begin{proof}
First, multiply every element in $W^{\sf small} \cup W^{\sf large}$ by $\ell$.  Then, for every element  $\ell i \in V^{\sf small}$, replace it by the $\ell$ elements  $\{\ell i,\ell i+1,\dotsc,\ell i+(\ell-1)\}$.  We obtain an $\alpha$-valuation of $K_{\ell a,b}$.
\end{proof}

\begin{theorem}[{\sc Blowup II}]
Suppose $W^{\sf small}$ and $W^{\sf large}$ form an $\alpha$-valuation of $K_{a,b}$. Let $\ell> 1$ be an integer.
Then there is an $\alpha$-valuation of $K_{a,\ell b}$.
\end{theorem}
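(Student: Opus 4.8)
The plan is to mirror the construction used for {\sc Blowup I}, but operating on the large side rather than the small side, so that the resulting valuation acquires the run structure of a type II $\alpha$-valuation (in keeping with the fact that {\sc Blowup II} should invert {\sc Projection II}). Concretely, I would first multiply every element of $W^{\sf small}\cup W^{\sf large}$ by $\ell$; this rescales every difference of the valuation by a factor of $\ell$, so that the set of edge differences becomes $\ell\cdot\{1,\dots,ab\}=\{\ell,2\ell,\dots,\ell ab\}$, and it moves every label onto a multiple of $\ell$. Then, for each scaled large element $\ell y$, I would replace it by the run of $\ell$ consecutive integers \emph{ending} at $\ell y$, namely $\{\ell y-\ell+1,\dots,\ell y-1,\ell y\}$, leaving the rescaled small elements untouched.

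The key verification is that the new labelling is again an $\alpha$-valuation, now of $K_{a,\ell b}$. First I would check the sizes: the small side is unchanged and still has $a$ elements, while each of the $b$ large labels is blown up into $\ell$ labels, giving $\ell b$ large elements and hence $a\cdot \ell b=\ell ab$ edges. Next I would check the difference condition. Fixing a rescaled small label $\ell w$ and a rescaled large label $\ell y$, the edge differences produced by the run replacing $\ell y$ are $(\ell y-k)-\ell w=\ell(y-w)-k$ for $k=0,\dots,\ell-1$, i.e.\ the run of $\ell$ consecutive integers ending at $\ell(y-w)$. As $\ell(y-w)$ ranges over $\{\ell,2\ell,\dots,\ell ab\}$, these runs tile $\{1,2,\dots,\ell ab\}$ exactly, which is the required edge-label set.

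Finally I would verify the threshold (bipartition) condition. If $x$ is the threshold of the original valuation, so that the largest small label is $x$ and the smallest large label is $x+1$, then after rescaling the largest small label is $\ell x$ and the smallest large run is $\{\ell x+1,\dots,\ell x+\ell\}$; thus $\ell x$ separates all small labels from all large labels. I would also note that the blown-up runs are pairwise disjoint and disjoint from the small labels: distinct original large labels are distinct integers, so their $\ell$-fold scalings differ by at least $\ell$, whence the length-$\ell$ runs ending at them cannot overlap. Together these observations show that the rescaled $W^{\sf small}$ and the union of the runs form an $\alpha$-valuation of $K_{a,\ell b}$.

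I do not expect any serious obstacle: the argument is the exact dual of {\sc Blowup I}, and the only point requiring care is the orientation of the runs — they must end (rather than begin) at the multiples of $\ell$ — which is dictated by the type II normal form and is precisely what makes the rescaled differences $\{\ell,2\ell,\dots,\ell ab\}$ fill downward into the complete interval $\{1,\dots,\ell ab\}$.
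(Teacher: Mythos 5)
Your construction is exactly the one the paper uses: rescale all labels by $\ell$ and replace each large label $\ell y$ by the run $\{\ell y-\ell+1,\dots,\ell y\}$ ending at it. The paper states this in one line and leaves the verification implicit, whereas you carry out the (correct) checks of the edge count, the tiling of $\{1,\dots,\ell ab\}$ by downward runs, the threshold at $\ell x$, and the disjointness of the runs; there is nothing to object to.
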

\begin{proof}
First, multiply every element in $W^{\sf small} \cup W^{\sf large}$ by $\ell$.  Then, for every element  $\ell i \in V^{\sf large}$, replace it by the $\ell$ elements  $\{\ell i,\ell i-1,\dotsc,\ell i-(\ell-1)\}$.  We obtain an $\alpha$-valuation of $K_{a,\ell b}$.
\end{proof}

If we are applying a sequence of Blowup operations, can assume without loss of generality that we will alternate Blowup operations of types I and II. This is because consecutive Blowup operations of same type can be replaced by a single Blowup operation. For example, {\sc Blowup I} with $\ell_1$, followed by {\sc Blowup I} with $\ell_2$, is identical to {\sc Blowup I} with $\ell_1\ell_2$.

It is easy to see that if we start with an $\alpha$-valuation of $K_{a,b}$ and we perform {\sc Blowup I} ({\sc Blowup II}, respectively) followed by {\sc Projection I} ({\sc Projection II}, respectively) using the same value of $\ell$, then we recover the initial $\alpha$-valuation.
Similarly, if we start with an $\alpha$-valuation of $K_{a,b}$ and we perform {\sc Projection I} ({\sc Projection II}, respectively)
followed by {\sc Blowup I} ({\sc Blowup II}, respectively) using the same value of $\ell$, then we recover the initial $\alpha$-valuation.

The above discussion immediately yields the following classification theorem.

\begin{theorem}
Every $\alpha$-valuation of $K_{a,b}$ can be obtained from the trivial $\alpha$-valuation of $K_{1,1}$ by an alternating sequence of {\sc Blowup I} and {\sc Blowup II} operations.
\end{theorem}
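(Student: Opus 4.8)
The plan is to argue by strong induction on the product $ab$, treating the trivial $\alpha$-valuation of $K_{1,1}$ (where $ab=1$) as the base case; this valuation is ``obtained'' by the empty sequence of blowups. The conceptual heart of the argument is a preliminary observation that lets me replace ``equivalent to'' by ``equal to''. By Theorem~\ref{classify.thm}, any $\alpha$-valuation $\mathcal{V}$ of $K_{a,b}$ with $ab>1$ is equivalent to a type~I $\alpha$-valuation $\mathcal{V}_{\mathrm I}$. By Definition~\ref{equiv.def}, equivalence means that either $\mathcal{V}=\mathcal{V}_{\mathrm I}$, or $\mathcal{V}=\Phi_{ab}(\mathcal{V}_{\mathrm I})$. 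Since applying $\Phi_{ab}$ to a type~I valuation produces a type~II valuation (this is exactly the passage from Theorem~\ref{classify.thm} to Theorem~\ref{classify2.thm}), I conclude that \emph{every} $\alpha$-valuation of $K_{a,b}$ is itself either type~I or type~II. This is what allows the exact (not merely up-to-equivalence) inverse relationship between projection and blowup to be used in the induction.

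For the inductive step, suppose $\mathcal{V}$ is an $\alpha$-valuation of $K_{a,b}$ with $ab>1$, and let $\ell\geq 2$ be the integer supplied by the classification. If $\mathcal{V}$ is type~I, then, as noted before Lemma~\ref{lem7}, $\ell\mid a$, so Theorem~\ref{proj1.thm} applies and yields an $\alpha$-valuation $\mathcal{W}$ of $K_{a/\ell,\,b}$, whose parameter product $(a/\ell)b$ is strictly smaller than $ab$. By the induction hypothesis, $\mathcal{W}$ is obtainable from $K_{1,1}$ by a sequence of blowups. Applying {\sc Blowup I} with this same $\ell$ to $\mathcal{W}$ recovers $\mathcal{V}$ exactly, because {\sc Blowup I} and {\sc Projection I} (with equal $\ell$) are mutually inverse, as recorded in the discussion preceding this theorem. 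Hence $\mathcal{V}$ is itself obtainable from $K_{1,1}$ by a sequence of blowups, now ending in a {\sc Blowup I}. If instead $\mathcal{V}$ is type~II, I argue symmetrically, using $\ell\mid b$, Theorem~\ref{proj2.thm} to descend to an $\alpha$-valuation of $K_{a,\,b/\ell}$, the induction hypothesis, and {\sc Blowup II} to climb back.

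The only remaining point is to force the sequence to \emph{alternate}. The sequence produced above need not alternate, since the last blowup used to build $\mathcal{W}$ may have the same type as the blowup I append. However, as already observed in the text, two consecutive blowups of the same type and parameters $\ell_1,\ell_2$ coincide with a single blowup of that type and parameter $\ell_1\ell_2>1$; repeatedly merging such adjacent pairs turns any blowup sequence into an equivalent alternating one. I expect the main subtlety to lie not in this merging, which is routine, but in the preliminary dichotomy observation together with the divisibility facts $\ell\mid a$ (type~I) and $\ell\mid b$ (type~II): these guarantee both that each projection lands on an honest complete bipartite graph with integer parameters and that the induction measure $ab$ strictly decreases, so that the recursion terminates precisely at $K_{1,1}$.
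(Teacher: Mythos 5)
Your proposal is correct and follows essentially the same projection/blowup recursion on which the paper relies (the paper's ``proof'' is simply the discussion preceding the theorem, namely that {\sc Projection I}/{\sc II} reduce type~I/type~II valuations to smaller ones, that the blowups invert them exactly, and that consecutive same-type blowups merge). Your explicit preliminary observation that every $\alpha$-valuation with $ab>1$ is \emph{literally} of type~I or type~II --- because the only nontrivial equivalence is $\Phi_{ab}$, which carries type~I to type~II --- is a worthwhile sharpening of a point the paper leaves implicit, since it is exactly what guarantees the blowup sequence reconstructs the given valuation on the nose rather than merely up to equivalence.
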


\begin{example}
\label{44.exam}
Suppose we start with the trivial $\alpha$-valuation of $K_{1,1}$ consisting of $\{0\}$ and $\{1\}$.
Now apply two blowup operations with $\ell = 4$, in the order {\sc Blowup II}, {\sc Blowup I}.
We obtain the following:
\[ 
\begin{array}{|c|c|c|}
\hline
\rule[-.3\baselineskip]{0pt}{.18in}\text{operation} & V^{\sf small} & V^{\sf large} \\ \hline
 & \{0\} & \{ 1 \} \\ \hline
 \text{\sc Blowup II} & \{0\} & \{ 4 \} \\
\ell = 4 & \{0\} & \{ 1,2,3,4 \} \\ \hline
  \text{\sc Blowup I} & \{0\} & \{ 4,8,12,16 \} \\
\ell = 4 & \{0,1,2,3\} & \{ 4,8,12,16 \} \\ \hline
  \end{array}
 \]
 The result is the $\alpha$-valuation obtained from Example \ref{canonical.exam}.
\end{example}

\begin{example}
\label{2222.exam}
Suppose we start with the trivial $\alpha$-valuation of $K_{1,1}$ consisting of $\{0\}$ and $\{1\}$.
Now apply four blowup operations, all with $\ell = 2$, in the order {\sc Blowup II}, {\sc Blowup I}, {\sc Blowup II}, {\sc Blowup I}.
We obtain the following:
\[ 
\begin{array}{|c|c|c|}
\hline
\rule[-.3\baselineskip]{0pt}{.18in}\text{operation} & V^{\sf small} & V^{\sf large} \\ \hline
 & \{0\} & \{ 1 \} \\ \hline
 \text{\sc Blowup II} & \{0\} & \{ 2 \} \\
\ell = 2 & \{0\} & \{ 1,2 \} \\ \hline
  \text{\sc Blowup I} & \{0\} & \{ 2,4 \} \\
\ell = 2 & \{0,1\} & \{ 2,4 \} \\ \hline
  \text{\sc Blowup II} & \{0,2\} & \{ 4,8 \} \\
\ell = 2 & \{0,2\} & \{ 3,4,7,8 \} \\ \hline
  \text{\sc Blowup I} & \{0,4\} & \{ 6,8,14,16 \} \\
\ell = 2 & \{0,1,4,5\} & \{ 6,8,14,16\} \\ \hline
 \end{array}
 \]
\end{example}

\begin{example}
\label{242.exam}
Suppose we start with the trivial $\alpha$-valuation of $K_{1,1}$ consisting of $\{0\}$ and $\{1\}$.
Now apply three blowup operations, in the order {\sc Blowup II}  with $\ell = 2$, {\sc Blowup I} with $\ell =4$, and {\sc Blowup II} with $\ell = 2$.
We obtain the following:
\[ 
\begin{array}{|c|c|c|}
\hline
\rule[-.3\baselineskip]{0pt}{.18in}\text{operation} & V^{\sf small} & V^{\sf large} \\ \hline
 & \{0\} & \{ 1 \} \\ \hline
 \text{\sc Blowup II} & \{0\} & \{ 2 \} \\
\ell = 2 & \{0\} & \{ 1,2 \} \\ \hline
  \text{\sc Blowup I} & \{0\} & \{ 4,8 \} \\
\ell = 4 & \{0,1,2,3\} & \{ 4,8 \} \\ \hline
  \text{\sc Blowup II} & \{0,2\} & \{ 4,8 \} \\
\ell = 2 & \{0,2,4,6\} & \{ 7,8,15,16 \} \\ \hline
 \end{array}
 \]
\end{example}

\section{Strong external difference families and $\alpha$-valuations of $K_{a,a}$}
\label{SEDFs.sec}

In this section,  we consider the connections between certain strong external difference families and $\alpha$-valuations of $K_{a,a}$. We begin with some relevant definitions.

For two disjoint subsets $A,B$ of an additive  group $G$, define the multiset $\mathcal{D}(B,A)$ as follows:
\[ \mathcal{D}(B,A) = \{ y - x: y \in B, x \in A \}.  \]

\begin{definition}[\cite{PS16}]
Let $G$ be an additive  group of order $n$. Suppose $m \geq 2$. 
An {\em $(n, m, \ell; \lambda)$-strong external difference family} (or {\em $(n, m, \ell; \lambda)$-SEDF}) is a set of $m$ disjoint $\ell$-subsets of $G$, say $\mathcal{A} = (A_0,\dots,A_{m-1})$, such that  the following multiset equation holds for each $i\in\{0,1,\dotsc,m-1\}$:
\[
\bigcup_{j\neq i} \mathcal{D}(A_i, A_j) = \lambda (G \setminus \{0\}).
\]
\end{definition}

Since they were first defined in 2016, there have been numerous papers that have studied SEDFs. Some references include 
\cite{BJWZ,HuPa,JeLi,LeLiPr,LePr,MaSt,WYFF}. Here, we will focus mainly on $(n, 2, \ell; 1)$-SEDFs in cyclic groups $\zed_n$. For SEDFs with these parameters, we must have $n = \ell^2+1$.

The proof of the following lemma is immediate.

\begin{lemma} 
An $\alpha$-valuation of $K_{a,a}$ is an $(a^2+1, 2, a; 1)$-SEDF in the cyclic group $\zed_{a^2+1}$.
\end{lemma}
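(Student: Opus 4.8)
The plan is to unpack both definitions and show they literally coincide. Let $(V^{\sf small}, V^{\sf large})$ be an $\alpha$-valuation of $K_{a,a}$. By Definition, $\mathcal{V}$ labels the $2a$ vertices injectively with elements of $\{0,1,\dots,a^2\}$ (since $K_{a,a}$ has $n = a^2$ edges), and the $a^2$ edge labels $|u-v|$, taken over all edges, are exactly $\{1,2,\dots,a^2\}$, each occurring once. The $\alpha$-condition splits the vertices into $V^{\sf small}$ (labels $\le x$) and $V^{\sf large}$ (labels $> x$), each of size $a$, so I set $A_0 = V^{\sf small}$ and $A_1 = V^{\sf large}$ as subsets of $\zed_{a^2+1}$.

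First I would verify the $(a^2+1, 2, a; 1)$-SEDF parameters: we need $m = 2$ disjoint $a$-subsets of a group of order $n = a^2+1$, with $\lambda = 1$. Disjointness and size are immediate from injectivity of $\mathcal{V}$ and the partition into two classes of size $a$. The key step is the multiset condition, which for $m = 2$ reads $\mathcal{D}(A_0, A_1) = G \setminus \{0\}$ and $\mathcal{D}(A_1, A_0) = G \setminus \{0\}$. Since $K_{a,a}$ is complete bipartite, every pair $(u,v)$ with $u \in V^{\sf large}$, $v \in V^{\sf small}$ is an edge, so the multiset of differences $\{u - v : u \in V^{\sf large}, v \in V^{\sf small}\}$ is exactly $\mathcal{D}(V^{\sf large}, V^{\sf small})$, which has $a^2$ elements. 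Because in the $\alpha$-valuation every such pair has $u > v$ (the defining incidence property), the integer differences $u - v$ range over positive values, and the absolute-value edge labels $|u-v| = u-v$ are precisely $\{1,\dots,a^2\}$ with no repeats. Reducing modulo $a^2+1$, these $a^2$ nonzero values $\{1,\dots,a^2\} = \zed_{a^2+1}\setminus\{0\}$, so $\mathcal{D}(A_1, A_0) = G \setminus\{0\}$ as a multiset, giving $\lambda = 1$.

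For the other equation $\mathcal{D}(A_0, A_1) = G \setminus \{0\}$, I would observe that $\mathcal{D}(A_0, A_1) = \{v - u : u \in V^{\sf large}, v \in V^{\sf small}\} = \{-(u-v)\}$, which modulo $a^2+1$ is the negation of the multiset $\{1,\dots,a^2\}$. Since negation is a bijection of $\zed_{a^2+1}$ fixing $0$, the negated multiset is again $\zed_{a^2+1}\setminus\{0\}$, each element once. Hence both SEDF conditions hold with $\lambda = 1$, completing the proof.

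I do not anticipate a genuine obstacle here; the lemma is essentially a dictionary translation. The only point requiring care is the reduction modulo $a^2+1$: I must confirm that the $a^2$ distinct integer differences in $\{1,\dots,a^2\}$ remain distinct and nonzero in $\zed_{a^2+1}$, which is immediate because they already form a complete set of nonzero residues, and that the completeness of $K_{a,a}$ is exactly what guarantees all $a^2$ cross-differences are realized as edges. This is why the statement's proof is called immediate in the excerpt.
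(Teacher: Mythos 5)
Your proof is correct, and it is exactly the direct translation the paper has in mind when it declares the lemma's proof ``immediate'' and omits it: the cross-differences $u-v$ with $u\in V^{\sf large}$, $v\in V^{\sf small}$ realize $\{1,\dots,a^2\}$ once each, and their negations give the other multiset equation. No gaps; the only implicit point worth a half-sentence is that the $\alpha$-valuation's split into $V^{\sf small}$ and $V^{\sf large}$ necessarily coincides with the (unique) bipartition of the connected graph $K_{a,a}$, which is what makes each class have size exactly $a$.
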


\begin{remark}
{\rm
Analogously, an $\alpha$-valuation of $K_{a,b}$ with $a\neq b$ gives rise to a so-called \emph{generalised SEDF} (or GSEDF) in the cyclic group $\zed_{ab+1}$. This GSEDF has two sets, one of size $a$ and one of size $b$. In the rest of this paper, we will focus on the SEDF case (i.e., $a=b$).
}
\end{remark}

We now consider the notion of equivalence of SEDFs.

\begin{definition}[\cite{PS16}]
\label{equiv.defn}
Suppose that $(A_0,A_{1})$ and $(B_0,B_{1})$ are $(n, 2, \ell; 1)$-SEDFs in  $\zed_n$.
We say that these two SEDFs are \emph{affine equivalent} (or, more briefly, \emph{equivalent}) if there exist $\alpha \in {\zed_n}^*$ and $\beta \in \zed_n$ such that
\begin{enumerate}
\item $B_i = \alpha A_i + \beta$, for $i = 1,2$, or 
\item $B_i = \alpha A_{1-i} + \beta$, for $i = 1,2$. 
\end{enumerate}
The second part of this definition simply allows us to switch the roles of $A_0$ and $A_1$.
\end{definition}

Note that equivalence of $\alpha$-valuations, as defined in Definition \ref{equiv.def}, is a special case of (affine) equivalence of SEDFs.

\begin{example}
Consider the  
$\alpha$-valuation from Example \ref{242.exam} consisting of  $\{0,2,4,6\}$ and $\{ 7,8,15,16 \}$.
Under the mapping $x \mapsto 9x \bmod 17$, we obtain $\{0,1,2,3\}$ and $\{ 4,8,12,16 \}$, which is the 
$\alpha$-valuation from Example \ref{44.exam}.
\end{example}

\begin{example}
In \cite{HucJefNep}, Huczynska, Jefferson and  Nep\v{s}insk\'{a} establish that there are precisely two inequivalent 
$(17, 2, 4; 1)$-SEDFs in  $\zed_{17}$, namely, 
\begin{center}$\{0,1,2,3\}$\quad \text{and} \quad$\{4,8,12,16\}$;\end{center} 
and 
\begin{center}$\{1,4,13,16\}$ \quad \text{and} \quad $\{2,8,9,15\}$.\end{center}  
The first SEDF is the $\alpha$-valuation from Example \ref{44.exam} (see also Example \ref{canonical.exam}).
The second SEDF is not an $\alpha$-valuation, but it is equivalent to an $\alpha$-valuation.
The affine function $x \mapsto 6x+11 \bmod 17$ transforms the second SEDF to $\{0, 1, 4, 5\}$ and $\{6, 8, 14, 16\}$, 
which is the 
$\alpha$-valuation from Example \ref{2222.exam}.
\end{example}

\begin{example}
It is interesting to observe that different blowup sequences can yield equivalent SEDFs.
The blowup sequence $(3,2,2,3)$ generates the 
SEDF consisting of 
\[  \{0,1,2,6,7,8\} \quad \text{and} \quad \{9, 12, 21, 24, 33, 36\}. \]
The SEDF generated from blowup sequence $(2,3,3,2)$ is
\[  \{0, 1, 6, 7, 12, 13 \} \quad \text{and} \quad \{14, 16, 18, 32, 34, 36\}. \]
If we apply the mapping $x \mapsto 6x+2 \bmod 37$ to the second SEDF, we obtain the first SEDF.

In a similar manner, it can be shown that the two blowup sequences $(2,2,3,3)$ and $(3,3,2,2)$ generate equivalent SEDFs, as do the three blowup sequences $(2,6,3)$, $(3,6,2)$ and $(6,6)$.
\end{example}

\subsection{Enumeration of SEDFs}

%All of the SEDFs listed in  Table  \ref{SEDFs.tab} are in fact $\alpha$-valuations. 
%The column labelled ``sequence'' indicates the $\ell$-values used in a sequence of blowup operations that yield each SEDF. In every case, we begin with a 
%{\sc Blowup II} operation and then alternate {\sc Blowup I} and {\sc Blowup II} operations. 

%We now describe the techniques we used to perform this enumeration.
Our enumeration depends on some interesting structural results that were proven in \cite{CGHK}. The paper \cite{CGHK} studies 
\emph{near-factorizations} of finite groups. A near-factorization of an additive group $G$ is a pair of (disjoint) subsets of $G$, say $(S,T)$, such that $S + T = G \setminus \{0\}$. It is clear that $(A,B)$ is an $(a^2+1, 2, a; 1)$-SEDF in  $\zed_{a^2+1}$ if and only if 
$(A, -B)$ is a near-factorization of  $\zed_{a^2+1}$ with $|A| = |B| = a$. Similarly, if $(A,B)$ is a GSEDF in $\zed_{ab+1}$  with $|A| = a$ and $|B| = b$, then $(A, -B)$ is a near-factorization of  $\zed_{ab+1}$ with $|A| =a$ and $|B| = b$, and conversely.

We use the following two results which follow immediately from \cite{CGHK}.

\begin{lemma}
\textup{\cite[Corollary 1]{CGHK}}
\label{lem1}
Suppose $(A,B)$ is an $(a^2+1, 2, a; 1)$-SEDF in  $\zed_{a^2+1}$. Then $-A = g + A$ and $-B = h + B$ for some $g, h \in \zed_{a^2+1}$.
\end{lemma}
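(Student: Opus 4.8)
The plan is to pass to the language of near-factorizations and then invoke the structural result of \cite{CGHK}. As recorded just above the statement, $(A,B)$ is an $(a^2+1,2,a;1)$-SEDF in $\zed_{a^2+1}$ if and only if $(A,-B)$ is a near-factorization of $\zed_{a^2+1}$; that is, $A+(-B)=\zed_{a^2+1}\setminus\{0\}$, with every nonzero element represented exactly once. The cited \cite[Corollary 1]{CGHK} asserts (in this language) that each of the two factors of a near-factorization of a finite abelian group is a translate of its own negation. Granting that, the only remaining work is to apply it to the two factors $A$ and $-B$ and to keep track of signs.

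Concretely, set $S=A$ and $T=-B$, so that $S+T=\zed_{a^2+1}\setminus\{0\}$. The cited corollary produces elements $g,c\in\zed_{a^2+1}$ with $-S=g+S$ and $-T=c+T$. The first of these is exactly $-A=g+A$. For the second, $-T=c+T$ reads $B=c+(-B)=c-B$; negating both sides gives $-B=-c+B$, so $-B=h+B$ with $h=-c$. This settles the statement once \cite{CGHK} is in hand, so the substance of the lemma is entirely inherited from that paper.

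For completeness I would indicate why such a symmetry holds, since this is where all the content lies. Work in the group algebra $\mathbb{C}[\zed_{a^2+1}]$ and, for a subset $X$, let $\widehat X=\sum_{x\in X}x$ be the corresponding group-algebra element. The near-factorization condition becomes $\widehat S\,\widehat T=\widehat G-1$, where $\widehat G=\sum_{x\in G}x$ and $1$ is the algebra identity. Each character $\chi$ extends to an algebra homomorphism, and for nontrivial $\chi$ one has $\chi(\widehat G)=0$, so $\chi(\widehat S)\,\chi(\widehat T)=-1$; in particular $\chi(\widehat S)\neq0$ for every nontrivial $\chi$. From this one gets uniqueness of complements: if $S+T=S'+T=G\setminus\{0\}$ then $\chi(\widehat S)=\chi(\widehat{S'})$ for all nontrivial $\chi$ (cancel $\chi(\widehat T)\neq0$) and also for the trivial character (since $|S|=|S'|=(|G|-1)/|T|$), whence $\widehat S=\widehat{S'}$ by Fourier inversion and $S=S'$.

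The genuinely hard step, and the one I expect to be the main obstacle, is to show that a factor is a translate of its negation, say $-S=g+S$. Using $\chi(\widehat{-S})=\overline{\chi(\widehat S)}$, this is equivalent to $\overline{\chi(\widehat S)}=\chi(g)\,\chi(\widehat S)$ for every $\chi$; that is, the unimodular phase $\chi\mapsto \overline{\chi(\widehat S)}/\chi(\widehat S)$ must coincide with the evaluation $\chi\mapsto\chi(g)$ of a single fixed element $g$ independent of $\chi$. The relation $\chi(\widehat S)\chi(\widehat T)=-1$ only forces this phase to be the multiplicative inverse of the corresponding phase for $T$, a consistency condition that does not by itself produce $g$. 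Manufacturing the element $g$ is exactly the content of \cite[Corollary 1]{CGHK}, so in the write-up I would cite that result rather than reprove it, and then carry out the short sign bookkeeping of the second paragraph.
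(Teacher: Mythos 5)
Your proposal is correct and matches the paper's treatment: the paper states this lemma as following immediately from \cite[Corollary 1]{CGHK} via the correspondence between the SEDF $(A,B)$ and the near-factorization $(A,-B)$, offering no independent proof. Your sign bookkeeping (obtaining $-B=h+B$ with $h=-c$ from $-T=c+T$ for $T=-B$) is the only step the paper leaves implicit, and you carry it out correctly; the character-theoretic sketch is a reasonable gloss on the cited result but, as you note, the substance is inherited from \cite{CGHK}.
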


From Lemma \ref{lem1}, the following can be proven.

\begin{lemma}
\textup{\cite[Proposition 2]{CGHK}}
\label{lem2}
Suppose $(A,B)$ is an $(a^2+1, 2, a; 1)$-SEDF in  $\zed_{a^2+1}$. Then there exists an element $g \in \zed_{a^2+1}$ such that
$g + A = -(g+A)$ and $g + B = -(g+B)$.
\end{lemma}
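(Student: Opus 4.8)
The plan is to reduce the statement to solving a single linear equation in $\zed_n$, where $n = a^2+1$. Using the near-factorization reformulation recorded just before the lemma, the SEDF $(A,B)$ gives the perfect cover $A + (-B) = \zed_n \setminus \{0\}$ (each nonzero element once). By Lemma \ref{lem1} I may write $-A = A + u$ and $-B = B + v$ for some $u,v \in \zed_n$ (renaming the constants of Lemma \ref{lem1} to free up the symbol $g$). The key observation is that a translate $g+A$ is symmetric, i.e.\ $g + A = -(g+A)$, \emph{precisely} when $-A = A + 2g$; comparing with $-A = A + u$, it suffices to produce a single $g$ with $2g = u$ and $2g = v$. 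Thus the whole lemma comes down to two sub-claims: (i) $u = v$, and (ii) $u$ lies in the image of multiplication by $2$ on $\zed_n$.

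First I would prove $u = v$ by exploiting the negation symmetry of the near-factorization. Applying the map $x \mapsto -x$ (which fixes $\zed_n \setminus \{0\}$ as a multiset) to $A + (-B) = \zed_n \setminus \{0\}$ yields $(-A) + B = \zed_n \setminus \{0\}$. Now substitute the two relations from Lemma \ref{lem1}: replacing the set $-A$ by $A + u$ and the set $B$ by $-B - v$ (both are set equalities, so the covered multiset is unchanged) turns the left-hand side into the multiset $\{(a-b) + (u-v) : a \in A,\ b \in B\}$, which is the original cover shifted by $u-v$. Since shifting $\zed_n \setminus \{0\}$ by $c$ produces $\zed_n \setminus \{c\}$, the resulting equality $\zed_n \setminus \{0\} = \zed_n \setminus \{u-v\}$ forces $u - v = 0$. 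Note that this step uses only Lemma \ref{lem1} and the near-factorization identity, and no aperiodicity of $A$ or $B$.

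Second I would show that $u$ is halvable. If $n$ is odd, then $2$ is a unit in $\zed_n$ and $g = 2^{-1}u$ works immediately. If $n$ is even, then $a$ must be odd (as $n = a^2+1$), and here I would sum the relation $-A = A + u$ over the $a$ elements of $A$ to obtain $-\sigma(A) \equiv \sigma(A) + a u \pmod n$, that is $a u \equiv -2\sigma(A) \pmod n$, where $\sigma(A) = \sum_{x \in A} x$. Reading this congruence modulo $2$, and using that $n$ is even (so reduction mod $n$ preserves parity) together with $a$ odd, forces $u$ to be even; hence $u \in 2\zed_n$ and $2g = u$ is solvable. Choosing such a $g$ and recalling $u = v$, both $g+A$ and $g+B$ are symmetric, which is exactly the claim.

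I expect the main obstacle to be locating the right symmetry argument for $u = v$. The naive attempt compares $\sigma(A)$ and $\sigma(B)$, but subtracting the two summed relations only yields $a(u-v) \equiv -2(\sigma(A)-\sigma(B)) \pmod n$, which does not pin down $u - v$. The clean resolution is the negation-substitution identity above, which converts the negation symmetry of $\zed_n \setminus \{0\}$ directly into the exact equation $u = v$. The secondary obstacle is the even case of halvability, which is dispatched by the short parity computation rather than by inverting $2$.
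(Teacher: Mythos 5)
Your proof is correct. Note that the paper does not actually give a proof of this lemma: it is quoted as \cite[Proposition 2]{CGHK}, with only the remark that it ``can be proven'' from Lemma \ref{lem1}. Your argument supplies exactly such a self-contained derivation, and both of its nontrivial steps check out. The reduction of the symmetry of $g+A$ to the equation $-A=A+2g$ is an honest equivalence; the negation-substitution computation, which rewrites the multiset $(-A)+B$ as $(A+u)+(-B-v)=(\zed_{a^2+1}\setminus\{0\})+(u-v)=\zed_{a^2+1}\setminus\{u-v\}$ and compares it with $\zed_{a^2+1}\setminus\{0\}$, correctly forces $u=v$ (and you are right that the naive comparison of $\sigma(A)$ with $\sigma(B)$ only yields $a(u-v)\equiv -2(\sigma(A)-\sigma(B))$, which is insufficient); and the halvability argument is sound in both cases, since for $n=a^2+1$ odd the element $2$ is a unit, while for $n$ even the parity of a residue class is well defined, so $au\equiv -2\sigma(A)\pmod{n}$ with $a$ odd forces $u$ even. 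Choosing $g$ with $2g=u=v$ then symmetrizes $A$ and $B$ simultaneously, which is the assertion of the lemma.
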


Lemma \ref{lem2} is very useful. It tells us that we can restrict our attention to SEDFs $(A,B)$ where $A = -A$ and $B = -B$, 
because any SEDF can be transformed into an SEDF of this form by a suitable translation. Such an SEDF will be termed \emph{symmetric}.

\begin{example}
The two sets $A = \{0,1,2,3\}$ \text{and} $B = \{4,8,12,16\}$ form a $(17, 2, 4; 1)$-SEDF in  $\zed_{17}$.
If we compute $A' = A + 7 = \{ 7,8,9,10\}$ and $B' = B + 7 = \{ 2,6,11,15\}$, we obtain an SEDF
$(A',B')$ in which  $A' = -A'$ and $B' = -B'$. Hence, $(A',B')$ is symmetric.
\end{example}

For $x \in \zed_{a^2+1}$, define $P_x = \{x, -x\}$. We note that $|P_x| = 1$ if $x = 0$, or if 
$a$ is odd and $x = (a^2+1)/2$, and $|P_x| = 2$, otherwise. It is clear that, in a  symmetric $(a^2+1, 2, a; 1)$-SEDF, say $(A,B)$, both $A$ and $B$ are a union of sets $P_x$. 

\begin{lemma} \mbox{\quad} \vspace{-.2in} \\
\label{pairs.lem}
\begin{enumerate}
\item Suppose $a$ is odd (so $a^2+1$ is even). 
Then $P_0 \subseteq A$ and $P_{(a^2+1)/2} \subseteq  B$, or $P_0 \subseteq B$ and $P_{(a^2+1)/2} \subseteq A$.
All other $P_x$'s contained in $A$ or $B$ have cardinality two.
\item Suppose that $a$ is even (so $a^2+1$ is odd).
Then  $A$ and $B$ are both unions of $P_x$'s of cardinality two.
\end{enumerate}
\end{lemma}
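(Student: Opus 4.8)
The plan is to exploit the fact that, for a symmetric SEDF, both $A$ and $B$ are invariant under negation, hence are disjoint unions of the orbits $P_x = \{x,-x\}$. First I would record the orbit structure of the negation map $x \mapsto -x$ on $\zed_{a^2+1}$: every orbit $P_x$ has size $2$ except for the fixed points, which are exactly the solutions of $2x = 0$. When $a^2+1$ is odd (i.e.\ $a$ even), the only fixed point is $0$, so $\{0\}$ is the unique singleton orbit; when $a^2+1$ is even (i.e.\ $a$ odd), there are two fixed points, $0$ and $(a^2+1)/2$, giving two singleton orbits $\{0\}$ and $\{(a^2+1)/2\}$. This is the structural input on which everything rests, and it is just the classification of fixed points of an involution.

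The whole argument then reduces to a parity count. Since $A$ is a disjoint union of orbits $P_x$, and each orbit of size $2$ contributes an even amount to $|A|$, the number of singleton orbits contained in $A$ is congruent to $|A| = a$ modulo $2$; the same holds for $B$. This single observation drives both cases.

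For part 1 ($a$ odd), $a$ is odd, so $A$ must contain an odd number of the two available singleton orbits, and likewise $B$; since there are exactly two singleton orbits, each of $A$ and $B$ contains exactly one of them. Because $A$ and $B$ are disjoint, the singletons $\{0\}$ and $\{(a^2+1)/2\}$ must be split between them, which yields precisely the two stated alternatives, and every remaining $P_x$ lying in $A$ or $B$ is then necessarily a $2$-element orbit. For part 2 ($a$ even), $a$ is even, so $A$ (and likewise $B$) contains an even number of singleton orbits; as $\{0\}$ is the only singleton orbit, this forces $0 \notin A$ and $0 \notin B$, whence both $A$ and $B$ are unions of $2$-element orbits $P_x$.

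I do not anticipate a serious obstacle, as the content is entirely the orbit bookkeeping. The one point requiring care is the forcing step in the odd case: I must use that there are \emph{precisely} two singleton orbits (not merely at least one), so that an odd count drawn from $\{0,1,2\}$ is pinned down to exactly $1$ for each of $A$ and $B$, and then invoke disjointness to conclude that $0$ and $(a^2+1)/2$ land in different sets.
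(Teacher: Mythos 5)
Your proposal is correct and follows essentially the same route as the paper's proof: a parity count of the singleton orbits of negation contained in $A$ and $B$, combined with the fact that there are exactly two such singletons when $a$ is odd (forcing a split by disjointness) and only one when $a$ is even (forcing exclusion). The paper states the parity step slightly more tersely, but the argument is the same.
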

\begin{proof}
When $a$ is odd, both $A$ and $B$ must contain a $P_x$ where $|P_x| = 1$. There are two $P_x$'s with $|P_x| = 1$ when $a$ is odd. Because $A$ and $B$ are disjoint, it must be the case that one of $P_0$ and $P_{(a^2+1)/2}$ is contained in $A$ and the other  is contained in $B$. 

When $a$ is even, both $A$ and $B$ must contain an even number of $P_x$'s where $|P_x| = 1$. But there is only one $P_x$ with $|P_x| = 1$ when $a$ is even, namely, $P_0$.  It follows that neither $A$ nor $B$ contains $P_0$.
\end{proof}

% \medskip

%\input{algorithm.tex}

%%%%%%%%%%%%%%%
%%
%% May 15 2:37pm
%%
%%%%%%%%%%%%%%%

\subsection{Enumeration algorithm}

We now describe the algorithm we used to enumerate the SEDFs that are presented in Table \ref{SEDFs.tab}. An $a$-subset $A\subseteq X=\zed_{a^2+1}$ is said to have 
a \emph{mate}, say $B$, if $B$ is an $a$-subset of $X$
such that  $\mathcal{D}(B,A) = X\setminus\{0\}$.
Of course $B$ must be disjoint from $A$.
Furthermore, by Lemma~\ref{pairs.lem}, we may assume that $A=-A$ and $B=-B$ (i.e., $A$ and $B$ are symmetric), 
and that both $A$ and $B$ are unions of sets $P_x$.

\begin{lemma}
\mbox{\quad}\vspace{-.2in}\\
\begin{enumerate} 
\item Suppose $x \neq y$ and $|P_x| = |P_y| = 2$. 
Then 
\begin{equation}
\label{P.eq}
\mathcal{D}(P_y,P_x) = \{ y - x, -y-x, y+x, -y + x\}  = P_{y-x} \cup P_{y+x}.
\end{equation}
\item Suppose $x \neq y$ and at least one of $P_x$ and $P_y$ has cardinality one.
Then 
\begin{equation}
\label{P2.eq}
\mathcal{D}(P_y,P_x) = P_{y-x} = P_{-y-x} = P_{y+x} = P_{-y+x}.
\end{equation}
\end{enumerate}
\end{lemma}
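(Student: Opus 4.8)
The plan is to prove both parts by direct computation from the definition $\mathcal{D}(P_y, P_x) = \{v - u : v \in P_y, u \in P_x\}$, tracking the sign structure of $P_x = \{x, -x\}$ explicitly. Since every $P_x$ has at most two elements, each multiset $\mathcal{D}(P_y, P_x)$ has at most four elements, so the whole argument reduces to expanding these four differences and recognising which $P$-sets they assemble into. Throughout I would use the observation that $|P_x| = 2$ is equivalent to $2x \neq 0$ in $\zed_{a^2+1}$, while $|P_x| = 1$ is equivalent to $x = -x$ (that is, $2x = 0$).

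For the first part, I would expand the definition with $P_y = \{y, -y\}$ and $P_x = \{x, -x\}$, which yields exactly the four differences $y - x$, $y + x$, $-y - x$, $-y + x$. I then regroup: $\{y - x, -y + x\} = \{y - x, -(y-x)\} = P_{y-x}$ and $\{y + x, -y - x\} = \{y + x, -(y+x)\} = P_{y+x}$, so the multiset equals $P_{y-x} \cup P_{y+x}$, as claimed.

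For the second part, the key idea is that a singleton $P_x$ means precisely $x = -x$, and similarly for $P_y$; this identity collapses the two $P$-sets from the first part into one. I would split into the relevant subcases according to which of $P_x, P_y$ has cardinality one. In the case $|P_x| = 1$, for instance, $x = -x$ gives $y + x = y - x$ and $-y - x = -y + x = -(y-x)$, so the four differences reduce to the two-element set $\{y - x, -y - x\} = P_{y-x}$, and moreover $P_{y+x} = P_{y-x} = P_{-y-x} = P_{-y+x}$. The case $|P_y| = 1$ is symmetric. When both are singletons (which can happen only for odd $a$), one additionally uses $2(y-x) = 2y - 2x = 0$ to see that $P_{y-x}$ itself is the singleton $\{y-x\}$, consistent with $\mathcal{D}(P_y, P_x) = \{y - x\}$.

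The computations are routine; the only point requiring genuine care is the bookkeeping of the degenerate identifications in the second part, namely verifying that \emph{all four} of $P_{y-x}, P_{-y-x}, P_{y+x}, P_{-y+x}$ coincide rather than merely that $\mathcal{D}(P_y, P_x)$ equals one of them. I expect the main (minor) obstacle to be keeping the multiset-versus-set distinction straight when elements collide, and confirming that the singleton hypothesis is exactly what forces the $P_{y \pm x}$ sets to merge; beyond $x \neq y$ and the stated cardinalities, no further hypothesis is needed.
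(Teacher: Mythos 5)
Your proposal is correct and follows essentially the same route as the paper: part 1 by direct expansion of the four differences into $P_{y-x}\cup P_{y+x}$, and part 2 by using the singleton identity ($2x=0$ or $2y=0$) to collapse the two $P$-sets into one (the paper treats $|P_y|=1$ explicitly via $(y-x)+(y+x)=2y=0$ and declares the other case similar, which mirrors your treatment of $|P_x|=1$). Your extra remarks on the both-singletons subcase and on verifying that all four subscripted $P$-sets coincide are consistent with, and slightly more explicit than, the paper's argument.
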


\begin{proof}
Part 1.\ is obvious, so we just discuss part 2. Suppose $|P_y| = 1$. Then $2y = 0$. We have 
$\mathcal{D}(P_y,P_x) = \{y-x,y+x\}.$ However, $(y-x) + (y+x) = 2y = 0$, so
$y + x = - (y-x)$. It follows that $\mathcal{D}(P_y,P_x) = P_{y-x} = P_{-y-x} = P_{y+x} = P_{-y+x}$.
The proof is similar when  $|P_x| = 1$.
\end{proof}

Given a fixed symmetric $A$, we want to choose  
$P_y$'s to place in potential mate $B$ so $\mathcal{D}(B,A)$ contains all the distinct $P_d$'s ($d \neq 0$) once each. Let $\mathcal{P}(A)=\{P_x: P_x \subseteq A\}$.

%First, we consider the case when $a$ is even. 
When $a$ is even, let $\mathcal{P}= \{P_x: 1 \leq x \leq a^2 / 2 \}$, so $|\mathcal{P}| = a^2 / 2$.
Here all sets $P_x$ contained in $A \cup B$ have cardinality two.

For odd values of $a$, let $\mathcal{P}= \{P_x: 1 \leq x \leq (a^2 +1) / 2 \}$, 
so $|\mathcal{P}| = (a^2 +1) / 2$.
Note that $|P_{(a^2 +1) / 2}| = 1$. 
We can assume without loss of generality that $P_0 \subseteq A$ (i.e., $0 \in A$)
and $P_{(a^2+1)/2} \subseteq B$ (i.e., $(a^2+1)/2 \in B$). 
This accounts for the difference $(a^2+1)/2$ in $\mathcal{D}(B,A)$.
All other sets $P_x$ contained in $A \cup B$ have cardinality two.

%Since $A$ and $B$ are both unions of pairs $P_x$, it follows that $\mathcal{D}(B,A)$ is also a union of pairs
%$P_x$. We desire that  $\mathcal{D}(B,A)$ contains every pair in $\mathcal{P}$.

Suppose we regard $A$ as fixed and we consider a possible $P_y \in \mathcal{P}\setminus \mathcal{P}(A)$ to be placed in $B$. 
For a given $P_y \in \mathcal{P}\setminus \mathcal{P}(A)$ and a given $P_d \in \mathcal{P}$, we will denote by $M_A[P_d,P_y]$ the number of 
$P_x$'s in $A$ such that $P_d \subseteq \mathcal{D}(P_y,P_x)$. This defines a $|\mathcal{P}|$ by $|\mathcal{P}\setminus \mathcal{P}(A)|$ matrix $M_A$. This matrix is basically a Kramer-Mesner matrix (see \cite{KM76}) where the rows and columns are indexed by orbits under the action of the multiplicative group $\{1,-1\}$.

\begin{lemma}
\label{mat.lem}
Suppose $A$ is a  subset of $\mathcal{P}$. Suppose $P_y \in \mathcal{P}\setminus \mathcal{P}(A)$.
Then \begin{equation}
\label{M.eq}
 M_A[P_d,P_y]= |\{P_x \subseteq A : P_x = P_{d \pm y} \} |.
\end{equation}
\end{lemma}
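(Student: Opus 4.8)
The plan is to prove Lemma~\ref{mat.lem} by directly unwinding the definition of $M_A[P_d,P_y]$ and applying the difference computations from the preceding lemma (equations \eqref{P.eq} and \eqref{P2.eq}). By definition, $M_A[P_d,P_y]$ counts the number of pairs $P_x \subseteq A$ for which $P_d \subseteq \mathcal{D}(P_y,P_x)$, so the entire task reduces to characterising, for fixed $P_y$ and $P_d$, exactly which $P_x$ satisfy this containment.

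First I would fix $P_x \subseteq A$ and determine when $P_d \subseteq \mathcal{D}(P_y,P_x)$. I would split into the two cases supplied by the previous lemma. In the generic case where $|P_x| = |P_y| = 2$, equation \eqref{P.eq} gives $\mathcal{D}(P_y,P_x) = P_{y-x} \cup P_{y+x}$, so $P_d \subseteq \mathcal{D}(P_y,P_x)$ holds precisely when $P_d = P_{y-x}$ or $P_d = P_{y+x}$, i.e.\ when $d \equiv \pm(y-x)$ or $d \equiv \pm(y+x)$. Rearranging, this says $x \equiv \pm(y-d)$ or $x \equiv \pm(y+d)$, which is exactly the condition $P_x = P_{d-y}$ or $P_x = P_{d+y}$, matching the claimed description $P_x = P_{d \pm y}$. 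In the degenerate case where one of $P_x, P_y$ has cardinality one, equation \eqref{P2.eq} collapses $P_{y-x}$ and $P_{y+x}$ to a single set, so the same condition $P_x = P_{d \pm y}$ still captures containment; I would verify that the collapse does not change the count.

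Having established the pointwise criterion, I would conclude that $P_x \subseteq A$ contributes to $M_A[P_d,P_y]$ if and only if $P_x \in \{P_{d-y}, P_{d+y}\}$ and $P_x \subseteq A$. Counting such $P_x$ gives exactly $|\{P_x \subseteq A : P_x = P_{d \pm y}\}|$, which is the right-hand side of \eqref{M.eq}, completing the proof.

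I do not expect any serious obstacle here, since the result is essentially a bookkeeping reformulation of the difference sets already computed. The one point requiring care is the degenerate case: when $P_{d-y}$ and $P_{d+y}$ coincide (for instance if $y$ or $x$ has order two, or if $d-y \equiv \pm(d+y)$), the set $\{P_x \subseteq A : P_x = P_{d \pm y}\}$ must be interpreted as a genuine set of distinct $P_x$'s so that no double-counting occurs; I would make sure the two-case analysis above is consistent with this interpretation, since that is the only place where the correspondence between ``$P_d \subseteq \mathcal{D}(P_y,P_x)$'' and ``$P_x = P_{d\pm y}$'' could conceivably misbehave.
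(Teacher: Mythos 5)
Your proposal is correct and takes essentially the same approach as the paper: both establish the pointwise equivalence ``$P_d \subseteq \mathcal{D}(P_y,P_x)$ if and only if $P_x = P_{d\pm y}$'' by case analysis on the four possible values of $d$ via equations \eqref{P.eq} and \eqref{P2.eq}, and then count. The paper writes this as two separate implications rather than a chain of equivalences, but the content is identical, and your extra caution about the degenerate/coincident cases is consistent with how the paper (and its examples, where entries equal to $2$ arise) interprets the count.
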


\begin{proof}
Suppose that $P_x \subseteq A$ and $P_x = P_{d \pm y}$. 
Replacing $x$ by $-x$ if necessary, we can assume $x = d \pm y$. If $x = d-y$, then $d = x+y$ and $P_d \subseteq \mathcal{D}(P_y,P_x)$ from  (\ref{P.eq}) or (\ref{P2.eq}). Similarly,
if $x = d+y$, then $d = x-y$ and $P_d \subseteq \mathcal{D}(P_y,P_x)$.

Conversely, if $P_d \subseteq \mathcal{D}(P_y,P_x)$, then $d \in \{ y - x, -y-x, y+x, -y + x\}$. We consider each possible case in turn:
\begin{itemize}
\item 
If $d = y-x$, then $x = y-d$ and $P_{x} = P_{-x} = P_{d - y}$. 
\item If $d = -y-x$, then $x = -y-d$ and $P_{x}  = P_{-x} = P_{d+y}$. 
\item If $d = y+x$, then $x = d-y$ and $P_{x} =  P_{d - y}$. 
\item Finally, if $d = -y+x$, then $x = d+y$ and $P_{x} =  P_{d +y}$. 
\end{itemize}
In each case, $P_x = P_{d \pm y}$.
\end{proof}
We observe that, for given values of $y$ and $d$, there are at most two possible values of $x$ such that 
(\ref{M.eq}) is satisfied.

%Let $J:\mathcal{P}\setminus\{P_0\}\rightarrow\zed$ be defined by
%$J[P_d]=1$ for all $P_d \in \mathcal{P}\setminus\{P_0\}$.

Let $J$ be the all-$1$'s vector of length $|\mathcal{P}|$.
In the following theorem $U = (u[P_y])$  will be a vector of length $|\mathcal{P}\setminus \mathcal{P}(A)|$ indexed by
\[\{ P_y \in \mathcal{P}\setminus \mathcal{P}(A)\} .\]

\begin{theorem}
\label{even-equiv.thm}
The matrix equation \begin{equation}
\label{matrix.eq}
M_AU^T=J^T
\end{equation}
 has a $(0,1)$-valued solution $U$
if and only
\[ B=\medmath{\bigcup} \: \{P_y \in \mathcal{P}\setminus \mathcal{P}(A) : u[P_y]=1 \}\] 
is a mate to $A$.
\end{theorem}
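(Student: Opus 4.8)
The plan is to read each coordinate of the vector $M_A U^T$ as a \emph{coverage count} telling us how many times a given nonzero difference orbit is produced by $\mathcal{D}(B,A)$, and then to show that $M_A U^T = J^T$ says exactly that every nonzero difference is produced once. Throughout I fix the candidate $B = \bigcup\{P_y \in \mathcal{P}\setminus\mathcal{P}(A) : u[P_y]=1\}$. Two structural facts drive the bookkeeping: since $A=-A$ and $B=-B$, the multiset $\mathcal{D}(B,A)$ is invariant under negation and decomposes as the multiset union $\biguplus_{P_x \subseteq A,\ P_y \subseteq B} \mathcal{D}(P_y,P_x)$; and since $X\setminus\{0\}$ is partitioned by the orbits $\{P_d : P_d \in \mathcal{P}\}$, the equality $\mathcal{D}(B,A)=X\setminus\{0\}$ is equivalent to requiring that each orbit $P_d$ be covered with total multiplicity $|P_d|$ (each of its elements exactly once).

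The central step is the identity
\[
(M_A U^T)[P_d] = \sum_{P_y \subseteq B} M_A[P_d,P_y] = \bigl|\{(P_y,P_x) : P_y \subseteq B,\ P_x \subseteq A,\ P_d \subseteq \mathcal{D}(P_y,P_x)\}\bigr|,
\]
which follows directly from the definition of $M_A$ and Lemma \ref{mat.lem}. I then convert this count of contributing orbit-pairs into a genuine multiplicity using (\ref{P.eq}) and (\ref{P2.eq}). For an orbit $P_d$ with $|P_d|=2$, whenever $P_d \subseteq \mathcal{D}(P_y,P_x)$ that orbit-pair contributes $d$ exactly once and $-d$ exactly once: in (\ref{P.eq}) the orbits $P_{y-x}$ and $P_{y+x}$ are distinct (because $2x,2y\neq 0$), and in (\ref{P2.eq}) the contribution is the single size-two orbit $P_{y\pm x}$. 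Hence for $|P_d|=2$ the coverage multiplicity of $P_d$ equals $(M_A U^T)[P_d]$, and the corresponding row of $M_A U^T = J^T$ asserts precisely that $P_d$ is covered once.

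The step I expect to be the main obstacle is the self-paired orbit $P_{d_0}$, $d_0=(a^2+1)/2$, that appears when $a$ is odd: here $|P_{d_0}|=1$ and $d_0=-d_0$, so a size-two orbit-pair with $y\pm x = d_0$ would contribute $d_0$ with multiplicity two yet be counted only once in $M_A[P_{d_0},P_y]$. I would resolve this by isolating the forced block $(P_{d_0},P_0)$: since $P_0\subseteq A$ and $P_{d_0}\subseteq B$ under our normalisation, this block always contributes exactly one copy of $d_0$ to $\mathcal{D}(B,A)$ and exactly $1$ to $(M_A U^T)[P_{d_0}]$. Consequently $(M_A U^T)[P_{d_0}]=1$ forces all size-two contributions to $d_0$ to vanish, so $d_0$ is covered exactly once; conversely, if $B$ is a mate then $d_0$ occurs exactly once, the size-two contributions must vanish, and $(M_A U^T)[P_{d_0}]=1$.

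Finally I would assemble the two directions. Combining the previous paragraphs, $M_A U^T = J^T$ holds if and only if every orbit $P_d\in\mathcal{P}$ is covered exactly once, which is exactly the statement that $\mathcal{D}(B,A)=X\setminus\{0\}$ as multisets. By construction $B$ is symmetric and disjoint from $A$, and $\mathcal{D}(B,A)=X\setminus\{0\}$ forces $|B|\,|A|=a^2$, hence $|B|=a$; thus this condition is precisely the assertion that $B$ is a mate to $A$.
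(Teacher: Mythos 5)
Your proposal follows essentially the same route as the paper's proof: both read the $P_d$-entry of $M_AU^T$ as the number of orbit-pairs $(P_x,P_y)$ with $P_x\subseteq A$, $P_y\subseteq B$ and $P_d\subseteq\mathcal{D}(P_y,P_x)$ (via Lemma \ref{mat.lem}), and identify the equation $M_AU^T=J^T$ with the statement that every nonzero difference orbit is covered exactly once. Where you differ is in being \emph{more} careful than the paper about the passage from ``each orbit $P_d$ is hit by exactly one orbit-pair'' to ``each element of $\zed_{a^2+1}\setminus\{0\}$ occurs exactly once in the multiset $\mathcal{D}(B,A)$'': the paper treats this as immediate, whereas you correctly isolate the only place it could fail, namely the self-paired orbit $P_{(a^2+1)/2}$ for odd $a$, where a pair of size-two orbits with $y\pm x=(a^2+1)/2$ contributes that element with multiplicity two while raising the row count by only one. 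The one soft spot is in the forward direction of that case: you dispose of it by saying $P_0\subseteq A$ and $P_{(a^2+1)/2}\subseteq B$ ``under our normalisation,'' but in the forward direction $B$ is \emph{defined} by the solution $U$, so $u[P_{(a^2+1)/2}]=1$ must be derived rather than assumed. It does follow, for instance by a global count: if $u[P_{(a^2+1)/2}]=0$ then $B$ consists of $m$ orbits of size two, the pairs involving $P_0$ each cover one orbit and the remaining $m(a-1)/2$ pairs each cover two, so $\sum_{P_d}(M_AU^T)[P_d]=ma$, which cannot equal $|\mathcal{P}|=(a^2+1)/2$ since $\gcd(a,a^2+1)=1$. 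With that small patch your argument is complete, and it actually fills a gap the paper's own proof leaves implicit.
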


\begin{proof}
Suppose $U$ is a $(0,1)$-valued solution to the matrix equation (\ref{matrix.eq}). 
Define
\[ B=\medmath{\bigcup} \: \{P_y \in \mathcal{P}\setminus \mathcal{P}(A) : u[P_y]=1 \}.\] 
We first observe that $B \cap A = \emptyset$.
Let $d \neq 0$. Because (\ref{matrix.eq}) is satisfied, there must be a unique $y$ such that $M_A[P_d,P_y]u[P_y] = 1$.
Hence,  $M_A[P_d,P_y] = u[P_y] = 1$.
Because $u[P_y] = 1$, it follows from Lemma \ref{mat.lem} that there is a unique $P_x \in A$ such that $P_d \subseteq \mathcal{D}(P_y,P_x)$.
Hence $B$ is a mate to $A$.

Conversely, suppose $B$ is a mate to $A$ and define 
$U$ by $u[P_y]=1$ if and only if
$P_y \subseteq B$. 
Consider any $d \neq 0$. Because $(A,B)$ is an SEDF, there is a unique
$P_x\in A$ and $P_y \in B$ such that $P_d \subseteq \mathcal{D}(P_y,P_x)$.
Hence there is a unique $y$ such that $M_A[P_d,P_y]u[P_y] = 1$.
It follows easily that (\ref{matrix.eq}) holds.
\end{proof}

\begin{remark}
The matrix equation (\ref{matrix.eq}) has a $(0,1)$-valued solution if and only if there is a subset of the columns of $M_A$ whose sum is $J$. 
\end{remark}

\begin{remark}
If a column of $M_A$ contains an entry equal to 2, then the corresponding entry of $U$ must equal $0$ in any  $(0,1)$-valued solution to the matrix equation (\ref{matrix.eq}).
\end{remark}

\begin{example} Suppose $a=4$, so $a^2 + 1 = 17$, and suppose $A = \{P_1,P_3\}$.
The matrix $M_A$ has rows indexed by $P_1, \dots, P_8$ and columns indexed by $P_2, P_4, P_5,P_6,P_7,P_8$.

Consider the column labelled $P_4$. Because $A = \{P_1,P_3\}$, we are interested in values $d$ such that $P_1 = P_{d \pm 4}$ or 
 $P_3 = P_{d \pm 4}$. In the first case, $d = 3$ or $5$; in the second case, $d = 1$ or $7$.
Hence, we have a $1$ in rows $P_1$, $P_3$, $P_5$ and $P_7$ of column $P_4$.

It can be checked that the column labelled $P_5$ has $1$'s in rows $P_2$, $P_4$, $P_6$ and $P_8$.
The sum of columns  $P_4$ and $P_5$ of $M_A$ gives the all-$1$'s vector of length eight, so $B = \{P_4,P_5\}$ is a mate to $A$. This solution corresponds to the vector $U = (0,1,1,0,0,0)$.

The complete matrix $M_A$ is as follows:
\[ 
\begin{array}{c||c|c|c|c|c|c}
    & P_2 & P_4 & P_5 & P_6 & P_7 & P_8 \\ \hline\hline
P_1 & 2 &  1 & 0 & 0 & 0 & 0 \\\hline
P_2 & 0 &  0 & 1 & 0 & 0 & 0 \\\hline
P_3 & 1 &  1 & 0 & 1 & 0 & 0 \\\hline
P_4 & 0 &  0 & 1 & 0 & 1 & 0 \\\hline
P_5 & 1 &  1 & 0 & 1 & 0 & 1 \\\hline
P_6 & 0 &  0 & 1 & 0 & 1 & 1 \\\hline
P_7 & 0 &  1 & 0 & 1 & 1 & 1 \\\hline
P_8 & 0 &  0 & 1 & 1 & 1 & 1 
\end{array}
\]

Note that the column labelled $P_2$ has an entry equal to $2$. To compute all the values in column $P_2$,
we find the values $d$ such that $P_1 = P_{d \pm 2}$ or 
 $P_3 = P_{d \pm 2}$. In the first case, $d = 1$ or $3$; in the second case, $d = 1$ or $5$.
Hence, we have a $2$ in row $P_1$ and $1$'s in rows $P_3$ and $P_5$.
\end{example}

\begin{example}
Suppose $a=3$, so $a^2 + 1 = 10$, and suppose $A = \{P_0,P_1\}$.
The matrix $M_A$ has rows indexed by $P_1, \dots, P_5$ and columns indexed by $P_2, P_3, P_4, P_5$.
The complete matrix $M_A$ is as follows:
\[ 
\begin{array}{c||c|c|c|c}
    & P_2 & P_3 & P_4 & P_5 \\ \hline\hline
P_1 &   1 & 0 & 0 & 0  \\\hline
P_2 &   1 & 1 & 0 & 0  \\\hline
P_3 &   1 & 1 & 1 & 0  \\\hline
P_4 &   0 & 1 & 1 & 1  \\\hline
P_5 &   0 & 0 & 1 & 1  \\
\end{array}
\]
The sum of columns $P_2$ and $P_5$ of $M_A$ gives the all-$1$'s vector of length five, so $B = \{P_2,P_5\}$ is a mate to $A$. This solution corresponds to the vector $U = (0,0,0,1,1)$.
Of course we already knew that it must be the case that $P_5 \subseteq B$.

It might be of interest to verity how the entries in column $P_5$ are computed, because $|P_5| = 1$.
We find the values $d$ such that $P_0 = P_{d \pm 5}$ or 
 $P_1 = P_{d \pm 5}$. In the first case, $d = 5$, so we have a $1$ in row $P_5$. In the second case, $d = 4$ or $6$. Of course $P_4 = P_6$ and we only consider values of $d \leq 10/2 = 5$, so we have a $1$ entry in row $P_4$.
\end{example}

Solutions to  the matrix equation $M_AU^T = J^T$  can be found using a variety
of methods. We prefer the very fast 
backtracking algorithm using the ``dancing links'' data structure 
described in
\cite{Knuth} that was implemented by 
Petteri Kaski and  Olli Pottonen.
It can be downloaded from the web page \cite{libexact}.
We note that, before running this algorithm, it is necessary to delete any columns 
that contain an entry equal to 2.
%\begin{center}
%\url{http://pottonen.kapsi.fi/libexact.html} 
%\end{center}
%It is very fast.

A pair $(A,B)$ of $a$-element subsets of $\zed_{a^2+1}$
is a $(a^2+1,2,a;1)$-SEDF if $B$ is a mate to $A$.
Solutions to the matrix equation will yield many equivalent $(a^2+1,2,a;1)$-SEDFs, where equivalence is defined as in Definition \ref{equiv.defn}.

Let $v=a^2+1$, let $v_0 = \lfloor \frac{v-1}{2} \rfloor$ and let $a = \lfloor \frac{a}{2} \rfloor$.
Because we may assume $(A,B)$ is symmetric, we can assume that
\begin{equation}\label{A}
   A= \left\{\begin{array}{ll}
T \cup (v-T) & \text{if $a$ is even}\\
\{0\} \cup T \cup (v-T) & \text{if $a$ is odd},\\
\end{array}\right.
\end{equation}
where $T$ is an $a_0$-element subset of 
$\{1,2,\ldots,v_0\}$ and
\[
v-T = \{v-x: x \in T\}.
\]
We also note that if $A$ and $B$ are symmetric,
then $mA$ and $mB$ are  also symmetric for any non-zero unit
$m$ in $\zed_v^*$.  

\begin{enumerate}
\item 
For each $a_0$-subset $T \subseteq \{1,2,\ldots,v_0\}$,
we construct $A$ according to  (\ref{A}).
If there is a unit $m \in \zed_v^*$ such that (in lexicographical order)
%\[mA \stackrel{<}{\scriptsize{\textsc{lex}}} A, \]
\[mA \text{ \raisebox{-.03in}{$\stackrel{<}{\scriptsize{\textsc{lex}}}$} } A, \]
then we discard $A$ and proceed to the next $a_0$-subset $T$ in $\{1,2,\ldots,v_0\}$; otherwise we process $A$.

\item For each  $A$ that is to be processed,
we then compute $M_A$ and find (0,1) valued solutions to the matrix equation $M_AU^T=J^T$.  
Each such solution $U$ corresponds to a mate $B$ for $A$,
yielding a symmetric SEDF, $(A,B)$.
\item
Finally, we convert the  SEDF $(A,B)$ into canonical form
as follows.
\begin{enumerate}
\item We find 
\[
f_A(X) \in 
  \textsc{Aff}(v) = \{ x\mapsto \alpha x + \beta :
\alpha , \beta \in \zed_{v}\text{ and }\gcd(\alpha,v)=1 \}
\]
such that $f_A(A) \text{ \raisebox{-.03in}{$\stackrel{<}{\scriptsize{\textsc{lex}}}$} }
%\stackrel{<}{\scriptsize{\textsc{lex}}} 
g(A)$
for all $g \in \textsc{Aff}(v)$.
\item We find 
\[
f_B(X) \in 
  \textsc{Aff}(v) = \{ x\mapsto \alpha x + \beta :
\alpha , \beta \in \zed_{v}\text{ and }\gcd(\alpha,v)=1 \}
\]
such that $f_B(B) \text{ \raisebox{-.03in}{$\stackrel{<}{\scriptsize{\textsc{lex}}}$} }
%\stackrel{<}{\scriptsize{\textsc{lex}}} 
g(B)$
for all $g \in \textsc{Aff}(v)$.
\item If $f_A(A) \text{ \raisebox{-.03in}{$\stackrel{<}{\scriptsize{\textsc{lex}}}$} }
%\stackrel{<}{\scriptsize{\textsc{lex}}} 
f_B(B)$,  then we report $(f_A(A),f_A(B))$ as the canonical form solution;
otherwise we report $(f_B(B),f_B(A))$.
\end{enumerate}
\item Different SEDFs may have the same canonical form. Thus, as a final step,
we remove the duplicate canonical form solutions.
\end{enumerate}

%The results of our computation can be seen in Table 1.

We have enumerated all inequivalent $(a^2+1, 2, a, 1)$-SEDFs in  $\zed_{a^2+1}$, for $1 \leq a \leq 14$.
These are presented in Table  \ref{SEDFs.tab}. For each SEDF, we present a symmetric representation and the canonical representation, along with the affine mapping that transforms the symmetric representation to the canonical representation.
The symmetric representation uses the notation $P_x$ defined above. If we apply the specified affine mapping
modulo $a^2+1$ to every element of the symmetric representation, we obtain the given canonical representation.

%\pagebreak 
%\input{Table1.tex}

\begin{center}
\begin{longtable}{|c|c|c|}
\caption{Inequivalent $(a^2+1, 2, a, 1)$-SEDFs in  $\zed_{a^2+1}$}\\ \hline
\label{SEDFs.tab}
{number} & {solution} & {mapping} 
\\ \hline \hline 
\endfirsthead
\caption[]{(continued)}\\ \hline
{number} & {solution} & {mapping} 
\\ \hline \hline 
\endhead
\hline
3.1 &$\{P_{0},P_{1}\}$, $\{P_{2},P_{5}\}$ & X+1 \\ 
 & $\{0,1,2\}$,$\{3,6,9\}$ &  \\ 
\hline
\hline
4.1 &$\{P_{1},P_{3}\}$, $\{P_{4},P_{5}\}$ & 8X+10 \\ 
 & $\{0,1,2,3\}$,$\{4,8,12,16\}$ &  \\ 
\hline
4.2 &$\{P_{1},P_{4}\}$, $\{P_{2},P_{8}\}$ & 6X+11 \\ 
 & $\{0,1,4,5\}$,$\{6,8,14,16\}$ &  \\ 
\hline
\hline
5.1 &$\{P_{0},P_{1},P_{2}\}$, $\{P_{3},P_{8},P_{13}\}$ & X+2 \\ 
 & $\{0,1,2,3,4\}$,$\{5,10,15,20,25\}$ &  \\ 
\hline
\hline
6.1 &$\{P_{1},P_{3},P_{5}\}$, $\{P_{6},P_{7},P_{18}\}$ & 18X+21 \\ 
 & $\{0,1,2,3,4,5\}$,$\{6,12,18,24,30,36\}$ &  \\ 
\hline
6.2 &$\{P_{1},P_{2},P_{17}\}$, $\{P_{4},P_{10},P_{16}\}$ & 2X+4 \\ 
 & $\{0,1,2,6,7,8\}$,$\{9,12,21,24,33,36\}$ &  \\ 
\hline
\hline
7.1 &$\{P_{0},P_{1},P_{2},P_{3}\}$, $\{P_{4},P_{11},P_{18},P_{25}\}$ & X+3 \\ 
 & $\{0,1,2,3,4,5,6\}$,$\{7,14,21,28,35,42,49\}$ &  \\ 
\hline
\hline
8.1 &$\{P_{1},P_{3},P_{5},P_{7}\}$, $\{P_{8},P_{9},P_{24},P_{25}\}$ & 32X+36 \\ 
 & $\{0,1,2,3,4,5,6,7\}$,$\{8,16,24,32,40,48,56,64\}$ &  \\ 
\hline
8.2 &$\{P_{1},P_{6},P_{8},P_{15}\}$, $\{P_{5},P_{13},P_{23},P_{24}\}$ & 28X+38 \\ 
 & $\{0,1,2,3,8,9,10,11\}$,$\{12,16,28,32,44,48,60,64\}$ &  \\ 
\hline
8.3 &$\{P_{1},P_{4},P_{13},P_{16}\}$, $\{P_{2},P_{8},P_{26},P_{32}\}$ & 22X+43 \\ 
 & $\{0,1,4,5,16,17,20,21\}$,$\{22,24,30,32,54,56,62,64\}$ &  \\ 
\hline
\hline
9.1 &$\{P_{0},P_{1},P_{2},P_{3},P_{4}\}$, $\{P_{5},P_{14},P_{23},P_{32},P_{41}\}$ & X+4 \\ 
 & $\{0,1,2,3,4,5,6,7,8\}$,$\{9,18,27,36,45,54,63,72,81\}$ &  \\ 
\hline
9.2 &$\{P_{0},P_{1},P_{8},P_{9},P_{10}\}$, $\{P_{11},P_{14},P_{17},P_{38},P_{41}\}$ & X+10 \\ 
 & $\{0,1,2,9,10,11,18,19,20\}$,$\{21,24,27,48,51,54,75,78,81\}$ &  \\ 
%\hline
\hline
10.1 &$\{P_{1},P_{3},P_{5},P_{7},P_{9}\}$, $\{P_{10},P_{11},P_{30},P_{31},P_{50}\}$ & 50X+55 \\ 
 & $\{0,1,2,3,4,5,6,7,8,9\}$,$\{10,20,30,40,50,60,70,80,90,100\}$ &  \\ 
\hline
10.2 &$\{P_{1},P_{2},P_{32},P_{35},P_{36}\}$, $\{P_{11},P_{16},P_{31},P_{38},P_{43}\}$ & 3X+7 \\ 
 & $\{0,1,2,3,4,10,11,12,13,14\}$,$\{15,20,35,40,55,60,75,80,95,100\}$ &  \\ 
\hline
\hline
11.1 &$\{P_{0},P_{1},P_{2},P_{3},P_{4},P_{5}\}$, $\{P_{6},P_{17},P_{28},P_{39},P_{50},P_{61}\}$ & X+5 \\ 
 & $\{0,1,2,3,4,5,6,7,8,9,10\}$,$\{11,22,33,44,55,66,77,88,99,110,121\}$ &  \\ 
\hline
\hline
12.1 &$\{P_{1},P_{3},P_{5},P_{7},P_{9},P_{11}\}$, $\{P_{12},P_{13},P_{36},P_{37},P_{60},P_{61}\}$ & 72X+78 \\ 
 & $\{0,1,2,3,4,5,6,7,8,9,10,11\}$ &  \\ 
 & $\{12,24,36,48,60,72,84,96,108,120,132,144\}$ &  \\ 
\hline
12.2 &$\{P_{1},P_{6},P_{27},P_{34},P_{55},P_{62}\}$, $\{P_{8},P_{22},P_{31},P_{45},P_{54},P_{68}\}$ & 57X+93 \\ 
 & $\{0,1,2,3,4,5,12,13,14,15,16,17\}$ &  \\ 
 & $\{18,24,42,48,66,72,90,96,114,120,138,144\}$ &  \\ 
\hline
12.3 &$\{P_{1},P_{2},P_{4},P_{5},P_{68},P_{71}\}$, $\{P_{7},P_{19},P_{31},P_{43},P_{55},P_{67}\}$ & 48X+98 \\ 
 & $\{0,1,2,3,4,5,18,19,20,21,22,23\}$ &  \\ 
 & $\{24,30,36,60,66,72,96,102,108,132,138,144\}$ &  \\ 
\hline
12.4 &$\{P_{1},P_{3},P_{13},P_{15},P_{17},P_{19}\}$, $\{P_{20},P_{21},P_{28},P_{29},P_{68},P_{69}\}$ & 72X+82 \\ 
 & $\{0,1,2,3,4,5,24,25,26,27,28,29\}$ &  \\ 
 & $\{30,36,42,48,78,84,90,96,126,132,138,144\}$ &  \\ 
\hline
12.5 &$\{P_{1},P_{3},P_{21},P_{23},P_{25},P_{27}\}$, $\{P_{28},P_{29},P_{36},P_{37},P_{44},P_{45}\}$ & 72X+86 \\ 
 & $\{0,1,2,3,12,13,14,15,24,25,26,27\}$ &  \\ 
 & $\{28,32,36,64,68,72,100,104,108,136,140,144\}$ &  \\ 
\hline
12.6 &$\{P_{1},P_{2},P_{17},P_{20},P_{35},P_{38}\}$, $\{P_{4},P_{10},P_{16},P_{58},P_{64},P_{70}\}$ & 8X+16 \\ 
 & $\{0,1,2,6,7,8,24,25,26,30,31,32\}$ &  \\ 
 & $\{33,36,45,48,81,84,93,96,129,132,141,144\}$ &  \\ 
\hline
12.7 &$\{P_{1},P_{10},P_{19},P_{35},P_{44},P_{53}\}$, $\{P_{12},P_{15},P_{25},P_{52},P_{56},P_{62}\}$ & 16X+22 \\ 
 & $\{0,1,2,6,7,8,36,37,38,42,43,44\}$ &  \\ 
 & $\{45,48,57,60,69,72,117,120,129,132,141,144\}$ &  \\ 
\hline
\pagebreak
\hline
13.1 &$\{P_{0},P_{1},P_{2},P_{3},P_{4},P_{5},P_{6}\}$, $\{P_{7},P_{20},P_{33},P_{46},P_{59},P_{72},P_{85}\}$ & X+6 \\ 
 & $\{0,1,2,3,4,5,6,7,8,9,10,11,12\}$ &  \\ 
 & $\{13,26,39,52,65,78,91,104,117,130,143,156,169\}$ &  \\ 
\hline
\hline
14.1 &$\{P_{1},P_{3},P_{5},P_{7},P_{9},P_{11},P_{13}\}$, $\{P_{14},P_{15},P_{42},P_{43},P_{70},P_{71},P_{98}\}$ & 98X+105 \\ 
 & $\{0,1,2,3,4,5,6,7,8,9,10,11,12,13\}$ &  \\ 
 & $\{14,28,42,56,70,84,98,112,126,140,154,168,182,196\}$ &  \\ 
\hline
14.2 &$\{P_{1},P_{2},P_{38},P_{41},P_{77},P_{78},P_{80}\}$, $\{P_{19},P_{26},P_{43},P_{64},P_{71},P_{81},P_{88}\}$ & 5X+10 \\ 
 & $\{0,1,2,3,4,5,6,14,15,16,17,18,19,20\}$ &  \\ 
 & $\{21,28,49,56,77,84,105,112,133,140,161,168,189,196\}$ &  \\ 
\hline
\end{longtable}
\end{center}

As an example to illustrate an affine mapping, 
consider solution 4.2 in Table  \ref{SEDFs.tab}. The given symmetric form is
\[A = \{P_{1},P_{4}\} = \{1,4,13,16\}, \quad B = \{P_{2},P_{8}\} = \{ 2,8,9,15\}.\]
The canonical form is  \[6A+11 = \{0,1,4,5 \}, \quad 6B+11 = \{6,8,14,16 \}.\]
It is obtained from the mapping $X \mapsto 6X+11$ in $\zed_{17}$.

All of the SEDFs listed in  Table  \ref{SEDFs.tab} turn out to be $\alpha$-valuations. 
In Table \ref{blowup.tab}, 
the column labelled ``sequence'' indicates the $\ell$-values used in a sequence of blowup 
operations that yield each SEDF. 
The solution numbers correspond to the ones in Table \ref{SEDFs.tab}. In every case, we begin with a 
{\sc Blowup II} operation and then alternate {\sc Blowup I} and {\sc Blowup II} operations. 
Note that the sequence of blowup operations for a given solution is not necessarily uniquely determined.

\begin{table}[thb]
\caption{SEDFs and Blowup Sequences}
\label{blowup.tab}
\begin{center}
\begin{tabular}{|c|c||c|c|}
\hline
\rule[-.3\baselineskip]{0pt}{.18in} {number} & {blowup sequence} & {number} & {blowup sequence} 
\\ \hline \hline 
3.1 &  $(3,3)$ &
4.1 &  $(4,4)$ \\ \hline
4.2 &  $(2,2,2,2)$ &
5.1 &  $(5,5)$ \\\hline
6.1 &  $(6,6)$ &
6.2 &  $(3,2,2,3)$ \\\hline
7.1 &  $(7,7)$ &
 8.1 &  $(8,8)$ \\\hline
8.2 &  $(2,2,4,4)$ &
8.3 &  $(2,2,2,2,2,2)$ \\\hline
 9.1 &  $(9,9)$&
9.2 &   $(3,3,3,3)$ \\\hline
10.1 &  $(10,10)$ &
10.2 &  $(5,2,2,5)$ \\\hline
11.1 &  $(11,11)$ &
12.1 & $(4,2,3,6)$\\\hline
12.2 & $(3,2,2,2,2,3) $ &
12.3 & $(3,2,4,6)$\\\hline
12.4 & $(12,12)$ &
12.5 & $(4,3,3,4)$  \\\hline
12.6 & $(6,2,2,6)$ &
12.7 & $(2,2,3,2,2,3)$ \\\hline
13.1 & $(13,13)$ 
& 14.1 & $(14,14)$ \\\hline
14.2 & $(7,2,2,7)$ & & \\\hline
\end{tabular}
\end{center}
\end{table}

%\section{SEDFs in Dihedral Groups}

%\input{Dihedral}

\section{SEDFs in dihedral groups}
\label{dihedral.sec} 

We will be studying SEDFs and near factorizations in dihedral groups, so it is convenient to switch to multiplicative notation.
If $S$ and $T$ are subsets of a multiplicative finite group $G$ with identity $e$ such that
\[
ST = \{ st : s \in S, t \in T\} = G\setminus\{e\}, 
\]
then we say that $(S,T)$ is a \emph{near factorization} of $G$.
%and we denote it by $G-e=ST$.  
Near factorizations were, to our knowledge, first studied in de Caen {\it et al.}\ ~\cite{CGHK}. That paper was motivated by the
problem of factoring $J-I$ into a product of $(0,1)$ matrices.
Indeed, if $R$ is a subset of the group $G$, we may define the
matrix $M:G\times G \rightarrow \{0,1\}$ by $M[g,h] = 1$ if $gh^{-1} \in R$ and 0 otherwise. Then a near factorization $G-e=ST$ provides the $(0,1)$ matrix factorization
$M(S)M(T)=M(G\setminus\{e\})= J-I$, where $J$ is the matrix of all $1$'s. %Because $J-I$ is the adjacency matrix of the complete graph $K_n$, where $n=|G|$, a near factorization of $G$ can also be seen as a biclique decomposition of the complete graph. But our interests in this paper concern strong external difference families.

An $(n, 2, k, 1)$-\emph{strong external difference family} 
(or $(n, 2, k, 1)$-SEDF) in the multiplicative group $G$ is a pair 
of disjoint $k$-subsets $(A_1,A_2)$ of $G$, 
such that the multiset 
\[\{xy^{-1} : x \in A_1, y \in A_2\}\] contains each
element of $G-e$ exactly once. Thus $A_1 A_2^{-1} = G-e$
and hence $(A_1,A_2^{-1})$ is a near factorization.
Consequently, a $(n, 2, k, 1)$-SEDF in $G$ is equivalent to 
a near factorization of $G$ with both factors having size $k$.
Observe that
\begin{enumerate}
\item
if $ST=G-e$, then $T^{-1}S^{-1} = (ST)^{-1} = (G-e)^{-1}= G-e$; and
\item
if $ST=G-e$ and  $g,h \in G$, then 
$
(gSh)(h^{-1}Tg^{-1}) =
gSTg^{-1} = g(G-e)g^{-1} = G-e$.
\end{enumerate}
Hence we say that $(S,T)$, $(T^{-1},S^{-1})$ and 
$(gSh,h^{-1}Tg^{-1})$  are all \emph{equivalent} near factorizations of $G$.

\subsection{Example} % in a dihedral group}
% for $k=5$, $k^2+1 = 26$.}
\label{example}

We present an example based on \cite{CGHK}. Let $D_{13}$ be the dihedral group of order 26 with generators $a,b$ and relations
$a^{2}=1$, $b^{13}=1$ and $aba=b^{-1}$. 

\begin{enumerate}
\item
The elements of $D_{13}$ are
\[
%D_{13}=
\{a^ib^j:\text{$i=0,1$ and $j=0,1,2,\ldots,12$}\}.
\]
They  can be depicted by the following diagram:

\begin{center}
\tikz[scale=0.75, line width=1]
{
 \foreach \i in {0,1,2} { \draw (0,\i)--(13,\i); }
 \foreach \i in {0,...,13} {\draw (\i,0)--(\i,2);}
 \node at (-0.5,2.5) {\makebox(0,0){$i =$}};
 \node at (-0.5,1.5) {\makebox(0,0){$b^i$}};
 \node at (-0.5,0.5) {\makebox(0,0){$ab^i$}};
 \foreach \i in {0,...,12} 
 {
   \node at (\i+0.5,2.5) {$\i$};
 }
}
\end{center}

\item
Remove the identity and enter the sequence $1,2,3,4,5$ five times, starting in $a^1$ and wrapping around
the end, finishing at $b$. %(Five times because $5k=26-1$.)

\begin{center}
\tikz[scale=0.75, line width=1]
{
 \foreach \i in {0,1,2} { \draw (0,\i)--(13,\i); }
 \foreach \i in {0,...,13} {\draw (\i,0)--(\i,2);}
 \node at (-0.5,2.5) {\makebox(0,0){$i =$}};
 \node at (-0.5,1.5) {\makebox(0,0){$b^i$}};
 \node at (-0.5,0.5) {\makebox(0,0){$ab^i$}};
 \foreach \i in {0,...,12} 
 {
   \node at (\i+0.5,2.5) {$\i$};
 }
 \draw[fill=gray] (0,1)--(1,1)--(1,2)--(0,2)--cycle;
 \node at ( 1.5, 1.5) {\makebox(0,0){1}};
 \node at ( 2.5, 1.5) {\makebox(0,0){2}};
 \node at ( 3.5, 1.5) {\makebox(0,0){3}};
 \node at ( 4.5, 1.5) {\makebox(0,0){4}};
 \node at ( 5.5, 1.5) {\makebox(0,0){5}};
 \node at ( 6.5, 1.5) {\makebox(0,0){1}};
 \node at ( 7.5, 1.5) {\makebox(0,0){2}};
 \node at ( 8.5, 1.5) {\makebox(0,0){3}};
 \node at ( 9.5, 1.5) {\makebox(0,0){4}};
 \node at (10.5, 1.5) {\makebox(0,0){5}};
 \node at (11.5, 1.5) {\makebox(0,0){1}};
 \node at (12.5, 1.5) {\makebox(0,0){2}};
 \node at (12.5, 0.5) {\makebox(0,0){3}};
 \node at (11.5, 0.5) {\makebox(0,0){4}};
 \node at (10.5, 0.5) {\makebox(0,0){5}};
 \node at ( 9.5, 0.5) {\makebox(0,0){1}};
 \node at ( 8.5, 0.5) {\makebox(0,0){2}};
 \node at ( 7.5, 0.5) {\makebox(0,0){3}};
 \node at ( 6.5, 0.5) {\makebox(0,0){4}};
 \node at ( 5.5, 0.5) {\makebox(0,0){5}};
 \node at ( 4.5, 0.5) {\makebox(0,0){1}};
 \node at ( 3.5, 0.5) {\makebox(0,0){2}};
 \node at ( 2.5, 0.5) {\makebox(0,0){3}};
 \node at ( 1.5, 0.5) {\makebox(0,0){4}};
 \node at ( 0.5, 0.5) {\makebox(0,0){5}};
}
\end{center}

\item
Partition the cells into tiles of the same shape  that each  contain exactly one
cell of each type. 

\begin{center}
\tikz[scale=0.75, line width=1]
{
 \node at (-0.5,2.5) {\makebox(0,0){$i =$}};
 \node at (-0.5,1.5) {\makebox(0,0){$b^i$}};
 \node at (-0.5,0.5) {\makebox(0,0){$ab^i$}};
 \foreach \i in {0,...,12} 
 {
   \node at (\i+0.5,2.5) {$\i$};
 }
 \draw[fill=red!20!white] (0,0)--(13,0)--(13,2)--(0,2)--cycle;
 \draw[fill=gray] (0,1)--(1,1)--(1,2)--(0,2)--cycle;
 \foreach \i in {0,5,10}
 {
  \draw[fill=blue!10!white](\i,0)--(3+\i,0)--(3+\i,2)--(1+\i,2)--(1+\i,1)--(\i,1)--cycle;
 }
 \node at ( 1.5, 1.5) {\makebox(0,0){1}};
 \node at ( 2.5, 1.5) {\makebox(0,0){2}};
 \node at ( 3.5, 1.5) {\makebox(0,0){3}};
 \node at ( 4.5, 1.5) {\makebox(0,0){4}};
 \node at ( 5.5, 1.5) {\makebox(0,0){5}};
 \node at ( 6.5, 1.5) {\makebox(0,0){1}};
 \node at ( 7.5, 1.5) {\makebox(0,0){2}};
 \node at ( 8.5, 1.5) {\makebox(0,0){3}};
 \node at ( 9.5, 1.5) {\makebox(0,0){4}};
 \node at (10.5, 1.5) {\makebox(0,0){5}};
 \node at (11.5, 1.5) {\makebox(0,0){1}};
 \node at (12.5, 1.5) {\makebox(0,0){2}};
 \node at (12.5, 0.5) {\makebox(0,0){3}};
 \node at (11.5, 0.5) {\makebox(0,0){4}};
 \node at (10.5, 0.5) {\makebox(0,0){5}};
 \node at ( 9.5, 0.5) {\makebox(0,0){1}};
 \node at ( 8.5, 0.5) {\makebox(0,0){2}};
 \node at ( 7.5, 0.5) {\makebox(0,0){3}};
 \node at ( 6.5, 0.5) {\makebox(0,0){4}};
 \node at ( 5.5, 0.5) {\makebox(0,0){5}};
 \node at ( 4.5, 0.5) {\makebox(0,0){1}};
 \node at ( 3.5, 0.5) {\makebox(0,0){2}};
 \node at ( 2.5, 0.5) {\makebox(0,0){3}};
 \node at ( 1.5, 0.5) {\makebox(0,0){4}};
 \node at ( 0.5, 0.5) {\makebox(0,0){5}};
 \foreach \i in {0,1,2} { \draw (0,\i)--(13,\i); }
 \foreach \i in {0,...,13} {\draw (\i,0)--(\i,2);}
}
\end{center}
\item
Let $A$ be the group elements 
%dihedral entries 
in the leftmost tile:
\[
A=\{b,b^2,ab^2,ab,a\}.
\]
Each tile has a ``notch.'' Let $B$ be the group elements corresponding to these notches:
\[
B=\{e,ab^5,b^5,ab^{10},b^{10}\}.
\]
\end{enumerate}
Then $AB=D_{13}\setminus\{1\}$ and hence it is a near factorization: 
\[
\begin{array}{c|ccccc}
\cdot&e      &ab^{ 5}& b^{ 5}&ab^{10}& b^{10}\\\hline
 b     & b     &ab^{ 4}& b^{ 6}&ab^{ 9}& b^{11}\\
 b^{ 2}& b^{ 2}&ab^{ 3}& b^{ 7}&ab^{ 8}& b^{12}\\
ab^{ 2}&ab^{ 2}& b^{ 3}&ab^{ 7}& b^{ 8}&ab^{12}\\
ab     &ab     & b^{ 4}&ab^{ 6}& b^{ 9}&ab^{11}\\
a      &a      & b^{ 5}&ab^{ 5}& b^{10}&ab^{10}\\
\end{array}
\]
Consequently
$(A,B^{-1})$ is an $(26,2,5,1)$-SEDF in $D_{13}$.

The same method of construction will  produce a near factorization
of $D_{2n}$ into factors $A$ and $B$, whenever $|A| \times|B|= 2n-1$.

\subsection{An equivalent construction}

In  Huczynska {\it et al.} \cite[Example 4.1]{HucJefNep}, the following 
construction for an $(26,2,5,1)$-SEDF $(A_1,A_2)$ in $D_{13}$
is provided:
\begin{align*}
A_1 &= \{e,a,b,ab,b^2\}\\
A_2 &= \{ab^2,b^5,ab^7,b^{10},ab^{12}\}\\
\end{align*}
It is easily verified that $(A_1,A_2^{-1})$ is a near factorization of $D_{13}$:
\[
A_2^{-1}=\{ab^2,b^8,ab^7,b^{3},ab^{12}\}
\] and 
\[
\begin{array}{c|ccccc}
\cdot  &ab^{ 2}& b^{ 8}&ab^{ 7}& b^{ 3}&ab^{12}\\\hline
e      &ab^{ 2}& b^{ 8}&ab^{ 7}& b^{ 3}&ab^{12}\\
a      & b^{ 2}&ab^{ 8}& b^{ 7}&ab^{ 3}& b^{12}\\
 b     &ab     & b^{ 9}& ab^{ 6}& b^{ 4}&ab^{11}\\
ab     &b      &ab^{ 9}&b^{ 6}&ab^{ 4}& b^{11} \\
 b^2   &a      & b^{10}&ab^{ 5}& b^{ 5}&ab^{10}
\end{array}
\]
Diagramming $A_1$ and $A_2^{-1}$ we have the following:
\[\begin{array}{lc}
A_1:&\tikz[baseline=(O),scale=0.75, line width=1]
{
 \coordinate (O) at (0,1.0);
 \Bcell{0}{ 0}
 \Bcell{1}{ 0}
 \Bcell{0}{ 1}
 \Bcell{1}{ 1}
 \Bcell{0}{2}
 \foreach \i in {0,1,2} { \draw (0,\i)--(13,\i); }
 \foreach \i in {0,...,13} {\draw (\i,0)--(\i,2);}
 \node at (-0.5,2.5) {\makebox(0,0){$i =$}};
 \node at (-0.5,1.5) {\makebox(0,0){$b^i$}};
 \node at (-0.5,0.5) {\makebox(0,0){$ab^i$}};
 \foreach \i in {0,...,12} 
 {
   \node at (\i+0.5,2.5) {$\i$};
 }
}
\vspace{.25in}
\\
A_2^{-1}:&\tikz[baseline=(O),scale=0.75, line width=1]
{
 \coordinate (O) at (0,1.0);
 \Bcell{1}{ 2}
 \Bcell{0}{ 8}
 \Bcell{1}{ 7}
 \Bcell{0}{ 3}
 \Bcell{1}{12}
 \foreach \i in {0,1,2} { \draw (0,\i)--(13,\i); }
 \foreach \i in {0,...,13} {\draw (\i,0)--(\i,2);}
 \node at (-0.5,2.5) {\makebox(0,0){$i =$}};
 \node at (-0.5,1.5) {\makebox(0,0){$b^i$}};
 \node at (-0.5,0.5) {\makebox(0,0){$ab^i$}};
 \foreach \i in {0,...,12} 
 {
   \node at (\i+0.5,2.5) {$\i$};
 }
}
\end{array}
\]
\medskip

Visually, the pattern for $A_1$ and $A_2^{-1}$ seems to be the
same for $A$ and $B$. The bottom row needs  shifting and the 
diagram needs to be flipped. To see this algebraically
let $h=ab^2$. Then
\begin{align*}
A_1h 
&= \{e,a,b,ab,b^2\}h\\
&= \{e,a,b,ab,b^2\}ab^2\\
&= \{ab^2,b^2,ab,b,a\}\\ 
&= A
\intertext{and}
(A_2h)^{-1} = h^{-1}A_2^{-1}=hA_2^{-1}
%&=(ab^2)^{-1}\{ab^2,b^8,ab^7,b^{3},ab^{12}\}\\
&=ab^2\{ab^2,b^8,ab^7,b^{3},ab^{12}\}\\
&=\{e,ab^{10},b^5,ab^{5},b^{10}\}\\ 
&= B.
\end{align*}
Hence~\cite[Example 4.1 (ii)]{HucJefNep}
and the example in Section~\ref{example}
are equivalent.

\bigskip

\subsection{General result}
The construction generalizing the example in
Section~\ref{example} appears in~\cite{CGHK} as follows:
If $k$ is a divisor of $2n-1$, then we have the near factorization $D_n-1=AB$, where
\begin{align*}
A&= \{b^i : 1 \leq i \leq \tfrac{k-1}{2}\} \: \medmath{\bigcup} \: \{ab^{i}: 0 \leq i \leq \tfrac{k-1}{2}\}\\
\intertext{and}
B&=\{b^{ik} : 0 \leq ik < n \} \: \medmath{\bigcup} \: \{ab^{ik}:0 < ik <n\}.
\intertext{Here, we are interested in 
$(k^2 + 1, 2, k, 1)$-SEDFs in $D_{(k^2+1)/2}$, So we specialize to $2n-1 = k^2$ and obtain:}
A&=\{b^i : 1 \leq i \leq \tfrac{k-1}{2}\} \: \medmath{\bigcup} \: \{ab^{i}: 0 \leq i \leq \tfrac{k-1}{2}\}\\
\intertext{and}
B&=\{b^{ik} : 0 \leq i \leq \tfrac{k-1}{2} \} \: \medmath{\bigcup} \: \{ab^{ik}:1 \leq i \leq \tfrac{k-1}{2}\}. 
\intertext{The following construction 
of a $(k^2+1,2,k,1)$-SEDF $(A_1,A_2)$ in $D_{(k^2+1)/2}$
was published in ~\cite[Theorem 4.2]{HucJefNep}.}
A_1&=\{b^i: 0 \leq i \leq \tfrac{k-1}{2}\} \: \medmath{\bigcup} \: \{ab^i:0 \leq i \leq \tfrac{k-3}{2}\}\\
A_2&=\{b^{ik}: 1\leq i \leq \tfrac{k-1}{2}\} \: \medmath{\bigcup} \: \{ab^{ik+\tfrac{k-1}{2}}: 0\leq i \leq \tfrac{k-1}{2}\}
\end{align*}

\begin{theorem} 
In $D_{(k^2+1)/2}$, the dihedral group of order $k^2+1$,
the near factorization $(A,B)$ of $D_{(k^2+1)/2}$
and the $(k^2+1,2,k,1)$-SEDF $(A_1,A_2)$ are equivalent.
\end{theorem}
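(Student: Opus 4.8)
The plan is to exhibit an explicit equivalence between the two near factorizations rather than to re-derive either from scratch. Recall that the SEDF $(A_1,A_2)$ corresponds to the near factorization $(A_1,A_2^{-1})$, and that $(S,T)$ is equivalent to $(gSh,h^{-1}Tg^{-1})$ for every $g,h\in G$. Guided by the worked example of Section~\ref{example}, where $k=5$ and $h=ab^2=ab^{(k-1)/2}$, I would take $g=e$ and $h=ab^{(k-1)/2}$ and prove the two identities $A_1h=A$ and $h^{-1}A_2^{-1}=B$. Together these say precisely that $(A_1,A_2^{-1})$ and $(A,B)$ are related by the equivalence $(S,T)\mapsto(Sh,h^{-1}T)$, which is exactly what the theorem asserts.

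The identity $A_1h=A$ is a direct computation with the dihedral relations $b^ia=ab^{-i}$ and $ab^i=b^{-i}a$ together with $a^2=e$. Right-multiplying the rotation part $\{b^i:0\le i\le\tfrac{k-1}{2}\}$ of $A_1$ by $h$ sends $b^i\mapsto ab^{(k-1)/2-i}$, which sweeps out $\{ab^j:0\le j\le\tfrac{k-1}{2}\}$, while right-multiplying the reflection part $\{ab^i:0\le i\le\tfrac{k-3}{2}\}$ sends $ab^i\mapsto b^{(k-1)/2-i}$, which sweeps out $\{b^j:1\le j\le\tfrac{k-1}{2}\}$. The union of these two sets is exactly $A$.

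For the second identity I would verify the equivalent statement $(A_2h)^{-1}=B$, since $(A_2h)^{-1}=h^{-1}A_2^{-1}$. A short computation gives $A_2h=\{ab^{(k-1)/2-ik}:1\le i\le\tfrac{k-1}{2}\}\cup\{b^{-ik}:0\le i\le\tfrac{k-1}{2}\}$. As every reflection $ab^j$ is an involution, inversion fixes the reflection part and negates the rotation exponents, so the rotation part of $(A_2h)^{-1}$ becomes $\{b^{ik}:0\le i\le\tfrac{k-1}{2}\}$, which already matches the rotation part of $B$. It then remains to show that the reflection part $\{ab^{(k-1)/2-ik}:1\le i\le\tfrac{k-1}{2}\}$ equals the reflection part $\{ab^{ik}:1\le i\le\tfrac{k-1}{2}\}$ of $B$.

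This last set equality is the heart of the matter, and it is where the arithmetic of $n=(k^2+1)/2$ enters; I expect it to be the only genuinely nontrivial step. Since $2n=k^2+1$ we have $k^2\equiv-1\pmod n$, hence $k^{-1}\equiv-k\pmod n$. Working modulo $n$, the congruence $\tfrac{k-1}{2}-ik\equiv jk$ becomes, after multiplying by $k^{-1}\equiv-k$, the congruence $i+j\equiv\tfrac{k+1}{2}\pmod n$. Because $i$ and $j$ both lie in $\{1,\dots,\tfrac{k-1}{2}\}$, their sum lies strictly between $0$ and $n$, so this congruence is in fact the integer equation $j=\tfrac{k+1}{2}-i$. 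The assignment $i\mapsto\tfrac{k+1}{2}-i$ is an involution carrying $\{1,\dots,\tfrac{k-1}{2}\}$ bijectively onto itself, which exhibits the two reflection sets as equal and completes the proof. The only care required is bookkeeping: tracking exponents modulo $n$ consistently and confirming that $i+j$ never wraps around, which is exactly what upgrades the congruence to an honest equality.
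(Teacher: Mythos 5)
Your proposal is correct and follows essentially the same route as the paper: the same translation element $h=ab^{(k-1)/2}$, the same two identities $A_1h=A$ and $h^{-1}A_2^{-1}=(A_2h)^{-1}=B$, and the same use of $k^2\equiv-1\pmod n$ in the final reindexing (the paper substitutes $b^{-1}=b^{k^2}$ to rewrite the exponent directly as $\bigl(\tfrac{k+1}{2}-i\bigr)k$, where you instead multiply the congruence by $k^{-1}\equiv-k$, but these are the same computation).
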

\begin{proof}
Let $h=ab^{\tfrac{k-1}{2}}$. Then $b^ih = ab^{(k-1-2i)/2}$ and $ab^ih=b^{(k-1-2i)/2}$. Hence
\begin{align*}
A_1h
&=\bigg(\left\{b^i: 0 \leq i \leq \tfrac{k-1}{2}\right\} \: \medmath{\bigcup} \: \left\{ab^i:0 \leq i \leq \tfrac{k-3}{2}\right\}\bigg)h\\
&=\bigg(\left\{b^ih: 0 \leq i \leq \tfrac{k-1}{2}\right\} \: \medmath{\bigcup} \: \left\{ab^ih:0 \leq i \leq \tfrac{k-3}{2}\right\}\bigg)h\\
&=\left\{ab^{(k-1-2i)/2}: 0 \leq i \leq \tfrac{k-1}{2}\right\} \: \medmath{\bigcup} \: \left\{b^{(k-1-2i)}:0 \leq i \leq \tfrac{k-3}{2}\right\}\\
&=\left\{ab^{i}: 0 \leq i \leq \tfrac{k-1}{2}\right\} \: \medmath{\bigcup} \: \left\{b^{i}:1 \leq i \leq \tfrac{k-1}{2}\right\} \\&= A.\\
\intertext{And}
(A_2h)^{-1}= h^{-1}A_2^{-1} = h A_2^{-1}
&=h \bigg(\left\{b^{ik}: 1\leq i \leq \tfrac{k-1}{2}\right\} \: \medmath{\bigcup} \: \left\{ab^{ik+\tfrac{k-1}{2}}: 0\leq i \leq \tfrac{k-1}{2}\right\}\bigg)^{-1}\\
&=h \bigg(\left\{b^{-ik}: 1\leq i \leq \tfrac{k-1}{2}\right\} \: \medmath{\bigcup} \: \left\{ab^{ik+\tfrac{k-1}{2}}: 0\leq i \leq \tfrac{k-1}{2}\right\}\bigg)\\
&=\left\{hb^{-ik}: 1\leq i \leq \tfrac{k-1}{2}\right\} \: \medmath{\bigcup} \: \left\{hab^{ik+\tfrac{k-1}{2}}: 0\leq i \leq \tfrac{k-1}{2}\right\}\\
&=\left\{ab^{\tfrac{k-1}{2}-ik}: 1\leq i \leq \tfrac{k-1}{2}\right\} \: \medmath{\bigcup} \: \left\{b^{ik}: 0\leq i \leq \tfrac{k-1}{2}\right\}.\\
\intertext{We now use that  the identity $e=b^n$ and the assumption that $2n-1 = k^2$ to see that $b^{-1}= b^{k^2}$. Thus}
h^{-1}A_2^{-1} 
&=\left\{ab^{\tfrac{k+k^2}{2}-ik}: 1\leq i \leq \tfrac{k-1}{2}\right\} \: \medmath{\bigcup} \: \left\{b^{ik}: 0\leq i \leq \tfrac{k-1}{2}\right\}\\
&=\left\{ab^{\left( \tfrac{k+1}{2}-i \right) k}: 1\leq i \leq \tfrac{k-1}{2}\right\} \: \medmath{\bigcup} \: \left\{b^{ik}: 0\leq i \leq \tfrac{k-1}{2}\right\}\\
&=\left\{ab^{ik}: 1\leq i \leq \tfrac{k-1}{2}\right\} \: \medmath{\bigcup} \: \left\{b^{ik}: 0\leq i \leq \tfrac{k-1}{2}\right\} \\&= B.\\
\end{align*}
\end{proof}
\noindent
This near factorization and the equivalent SEDFs are the only known
examples in non-abelian groups.

\section{Summary}

There remain some interesting open problems. We mention some of them now.

\begin{enumerate}
\item Does there exist an $(a^2+1, 2, a, 1)$-SEDF in  $\zed_{a^2+1}$ that is not affine-equivalent to an $\alpha$-valuation? %The empirical evidence for $a \leq 10$ suggests that this is the case.
\item Is there a simple way to characterize when two sequences of blow-up operations yield affine-equivalent SEDFs?
\item Is there a generalization of $\alpha$-valuations that would provide constructions of $(a^2+1, 2, a, 1)$-SEDFs in non-cyclic groups? %(We note that some results on $(a^2+1, 2, a, 1)$-SEDFs in non-cyclic groups can be found in 
%\cite{HucJefNep}.)
\item Do there exist nonequivalent SEDFs in dihedral groups?
\item Does there exist an SEDF in a non-abelian group that is not a dihedral group? 
\end{enumerate}

\end{document}